\newcommand{\Aut}[1]{\mathsf{Aut}(#1)}%
\journalname{%
{\color{blue} Submitted version} 
%Draft~{\color{blue}06},
compiled~{\color{blue}June 18, 2007}
in {\color{blue} Frankfurt (Main), Germany}
}
\renewcommand{\emph}{\textbf}
\newcommand{\CAT}[1]{\mathsf{CAT}(#1)}%
\def\Trans#1#2_#3.{\mathsf{Trans}_{#3}(#1,\,#2)}%
\def\Aut#1_#2.{\mathsf{Aut}_{#2}(#1)}%
\def\Stab#1_#2.{#2_{\{#1\}}}%
\def\Fix#1_#2.{{#2_{#1}}}%%% \mathsf{Fix}_{#2}(#1)}%
\newcommand{\diag}{\operatorname{diag}}%
\begin{document}

\title{Contraction groups in complete Kac-Moody groups
\thanks{This work was supported by Australian Research Council grant DP0556017. %
The second author thanks
the Centre de Recerca Matem\'atica for its %
hospitality and support during June 2007.       
The second and third authors thank
The University of Newcastle %(Australia)
for its
hospitality and support.%
}
}
%Grants or other notes
%about the article that should go on the front page should be
%placed here. General acknowledgments should be placed at the end of the article.

%\subtitle{}

%\titlerunning{Commensurators of neighborhoods}        % if too long for running head

\author{U. Baumgartner   \and
              J. Ramagge \and
              B. R\'emy    %etc.
}

%\authorrunning{Short form of author list} % if too long for running head

\institute{Udo Baumgartner \at
              School of Mathematical and Physical Sciences,
              The University of Newcastle, University Drive, Building V,
              Callaghan, NSW 2308, Australia, %\\
              Tel.: +61-2 4921 5546, %\\
              Fax: +61-2 4921 6898, %\\
              \email{Udo.Baumgartner@newcastle.edu.au}           %  \\
%             \emph{Present address:} of F. Author  %  if needed
           \and
           Jacqui Ramagge           \at
           School of Mathematics and Applied Statistics
          University of Wollongong,
      Wollongong NSW 2522,
           Australia,
    Tel.: +61-2 4221 3845 (school office)
    Fax: +61-2 4221 4845
    %M: 0407 065 911
            \email{ramagge@uow.edu.au}
           \and
           Bertrand R\'emy \at
           Universit\'e de Lyon, Lyon, F-69003, France;
           Universit\'e de Lyon 1, Institut Camille Jordan, F-69622, France;
           CNRS, UMR 5208, Villeurbanne, F-69622, France, %\\
           Tel.: +33-4 72 44 82 07, %\\
           Fax: +33-4 72 43 16 87, %\\
           \email{remy@math.univ-lyon1.fr}
}

\date{Received: date / Revised: date}
% The correct dates will be entered by the editor
\maketitle

\begin{abstract}
Let $G$ be
an abstract Kac-Moody group
over a finite field
and $\overline{G}$
the closure of
the image
of $G$
in the automorphism group
of its positive building.
We show that
if
the
Dynkin diagram
associated to $G$
is irreducible
and neither
of spherical nor of affine type,
then
the contraction groups
of elements
in $\overline{G}$
which are
not topologically periodic
are not closed.
(In those groups
there always exist elements
which are not topologically periodic.)
\subclass{
%                  22D05, % (General properties and structure of locally compact groups)
%                  22D45  % (Automorphism groups of locally compact groups)
%                  (primary)
%                  %
%                  20E36, % (General theorems concerning automorphisms of groups),
%                  (secondary)
                  contraction group
         \and topological Kac-Moody group
         \and totally disconnected, locally compact group}
\end{abstract}

%%%%%%%%%%%%%%% BODY OF TEXT %%%%%%%%%%%%%%%%%%%
%\pagebreak

\section{Introduction}\label{sec:intro}

%\paragraph{Intention}
%what are contraction groups for
Let $\mathfrak{g}$ be
a continuous automorphism
of a topological group $G$
with continuous inverse.
Its \emph{contraction group}
is the subgroup
of $G$
defined by
\[
U_{\!\mathfrak{g}}:=\bigl\{x\in G\colon \mathfrak{g}^n(x)\longrightarrow e\quad
\text{as $n$ goes to infinity}\bigr\}\,.
\]
Interest in contraction groups
%and related concepts
has been stimulated
by applications
in the theory of
probability measures
and
random walks on,
and
the representation theory of,
locally compact groups.
For these applications
it is important
to know
whether
a contraction group
is closed.
%a question
%which
%we also consider %changed
%here.
%
We refer
the reader
to the introduction in~\cite{contrG+scales(AUT(tdlcG))} %changed
and the references
cited there
for information
about
the applications of contraction groups
and known results.
Recent articles
which treat contraction groups
are
\cite{class(simple-factors)<comp-ser(tdcontrGs)}
and
\cite{contrLieGs(localFs)}.

The article~\cite{contrG+scales(AUT(tdlcG))}
studied
the contraction group $U_{\!\mathfrak{g}}$
%of a continuous automorphism
and its supergroup
\[
P_{\!\mathfrak{g}}:=
\bigl\{x\in G\colon \{\mathfrak{g}^n(x)\colon n\in\mathbb{N}\}\ \text{is relatively compact}\bigr\} %\,,
\]
in the case where
the ambient group
is locally compact
and
totally disconnected,
a case in which previously little was known.
In contrast to $U_{\!\mathfrak{g}}$,
the group
$P_{\!\mathfrak{g}}$
is always closed
if
the ambient group $G$
is totally disconnected
\cite[Proposition~3, parts~($\romannumeral3$) and~($\romannumeral2$)]{tdlcG.structure}.
The group $P_{\!\mathfrak{g}}$
was named
the \emph{parabolic group}
of the automorphism $\mathfrak{g}$
in~\cite{contrG+scales(AUT(tdlcG))}
because
for any
inner automorphism
of a semisimple algebraic group
over a local field
its parabolic group
is
the group
of rational points
of a rational parabolic subgroup
(and every such group is of that form);
the corresponding contraction group
in that case
is
the group of rational points
of the unipotent radical  
of the parabolic subgroup.
In this algebraic group context,
identifying parabolic subgroups
(in the dynamical sense,
introduced above) 
and their unipotent radicals 
with parabolic subgroups
(in the algebraic group sense) 
and the corresponding contraction groups 
is a crucial technique                     
used by G. Prasad
to prove strong approximation
for semisimple groups
in positive characteristic
\cite{strong-approx(ssimpleGs(funcFs))}. 
This technique 
was later used again 
by G. Prasad 
to give a simple proof 
of Tits's theorem 
on cocompactness 
of open non-compact subgroups 
in simple algebraic groups 
over local fields \cite{elem:BTR+T}, 
which can be proved also 
by appealing to 
Howe-Moore's property. 
%nowhere 

In this article
we
investigate which  
contraction groups
of inner automorphisms
in complete Kac-Moody groups
are closed.
Complete Kac-Moody groups
(which we 
 introduce in Section~\ref{sec:frame})
are combinatorial generalizations
of semisimple algebraic groups
over local fields.
In contrast to                                  
members of
the latter class of groups,
complete Kac-Moody groups
are
generically
\underline{non-linear},
totally disconnected, locally compact groups.
These properties
make them
perfect test cases
for the developing structure theory
of totally disconnected, locally compact groups
which was
established in~\cite{tdlcG.structure},
and further advanced
in~\cite{furtherP(s(tdG))} and~\cite{tidy<:commAut(tdlcG)}.

Our main result is
the following theorem,
in whose statement the contraction group
of a group element
$g$
is understood
to be the contraction group
of the inner automorphism $\mathfrak{g}\colon x\mapsto gxg^{-1}$.

\begin{theorem}[Main Theorem]
\label{thm:MainThm}
Let $G$ be
an abstract Kac-Moody group
over a finite field
and $\overline{G}$ be
the closure of
the image
of $G$
in the automorphism group
of its positive building.
Then the following are true:
\begin{enumerate}
\item
The contraction group
of any topologically periodic element
in $\overline{G}$
is trivial.
\item
If
the type of $G$
is irreducible
and neither
spherical nor affine,
then
the contraction group
of any element
that is
not topologically periodic
in $\overline{G}$
is not closed.
\end{enumerate}
Furthermore,
the group $\overline{G}$
contains
non-topologically periodic elements
whenever
$G$
is not of spherical type.
\end{theorem}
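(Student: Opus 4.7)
The plan is to exploit the action of $\overline{G}$ on its positive building $\Delta_+$ together with the root group datum structure inherited from $G$; throughout I use that $\overline{G}$ is totally disconnected and locally compact and that the closed root subgroups $U_\alpha$ indexed by real roots sit nicely inside $\overline{G}$. Part (1) is essentially formal: if $g \in \overline{G}$ is topologically periodic, then $H := \overline{\langle g \rangle}$ is a compact subgroup, so the sequence $(g^n)_{n \geq 1}$ has an accumulation point in $H$ and a routine compactness argument produces integers $n_k \to \infty$ with $g^{n_k} \to e$. For any $x \in U_{\!\mathfrak{g}}$ one has $g^{n_k} x g^{-n_k} \to e$ by definition of $U_{\!\mathfrak{g}}$, while joint continuity of multiplication forces $g^{n_k} x g^{-n_k} \to x$, so $x = e$.

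For the existence statement, when $G$ is not of spherical type the Weyl group $W$ is infinite; I would take a Coxeter-type element $w = s_{i_1}\cdots s_{i_n} \in W$ and lift it to $G$ via the standard generators of the Kac-Moody group, then check that this lift acts on $\Delta_+$ with unbounded orbits. Such an element cannot lie in a compact subgroup of $\overline{G}$ and hence is not topologically periodic.

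The heart of the theorem is part (2). Given $g \in \overline{G}$ not topologically periodic, I would first reduce to the case where the image of $g$ in $W$ translates the Coxeter complex along an axis (using that, under the irreducibility and non-spherical/non-affine hypothesis, $W$ is a Coxeter group of indefinite type and hence contains plenty of hyperbolic elements). One then describes the root groups sitting inside $U_{\!\mathfrak{g}}$: the root group $U_\alpha$ lies in $U_{\!\mathfrak{g}}$ precisely when $g^n U_\alpha g^{-n}$ eventually enters any prescribed neighbourhood of the identity, which translates to a clean geometric condition on $\alpha$ relative to the axis. To prove $U_{\!\mathfrak{g}}$ fails to be closed, the strategy is to construct a sequence $x_N = u_1 u_2 \cdots u_N$ with $u_k \in U_{\alpha_k}$ nontrivial and the roots $\alpha_k$ carefully chosen so that $(x_N)$ converges in $\overline{G}$ by local finiteness of the building while the limit $x_\infty$ fails to lie in $U_{\!\mathfrak{g}}$. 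The reason such a limit can be produced in indefinite type but not in affine type is that the $g$-orbits of real roots are transverse in much richer ways: the sequence $(\alpha_k)$ can be chosen to accumulate towards a boundary point outside the Tits cone, so that the conjugates $g^n \cdot \alpha_k$ do not uniformly cross the positive wall of a single half-space.

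The main obstacle is exactly this last step: exhibiting a convergent infinite product in the closure of $U_{\!\mathfrak{g}}$ whose limit provably escapes $U_{\!\mathfrak{g}}$. This requires detailed control of the $g$-action on real roots in an indefinite Coxeter system, and a careful choice of the accumulating sequence $(\alpha_k)$; ruling out accidental cancellations that would place $x_\infty$ back into $U_{\!\mathfrak{g}}$ is the delicate point. It is exactly here that the non-affine hypothesis enters essentially, since in affine type the one-dimensional nature of the translation transverse to the axis forces all such convergent infinite products to remain inside the contraction group, which is precisely the mechanism that makes contraction groups closed in the semisimple algebraic case.
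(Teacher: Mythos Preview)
Your treatment of part~(1) and the existence statement is essentially correct and matches the paper's argument.

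For part~(2), however, there are two genuine gaps. First, your reduction ``to the case where the image of $g$ in $W$ translates the Coxeter complex along an axis'' does not make sense as stated: a general element of the completion $\overline{G}$ has no image in $W$, since $\overline{G}$ is not a quotient of $G$ and the Bruhat decomposition does not extend to a homomorphism. The paper handles this differently. It first uses the $\CAT0$ geometry of the Davis realization: since the complex has finitely many isometry classes of cells, there are no parabolic isometries, so any non-topologically-periodic $h$ is hyperbolic with an axis $l$. An infinite-product argument (folding $l$ into the standard apartment using root groups fixing a base chamber, then taking a limit in the compact chamber-stabilizer) produces $g\in\overline{G}$ with $g.l(\mathbb{R})\subset|\mathbb{A}|$. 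No projection to $W$ is needed.

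Second, and more seriously, your strategy of building a convergent infinite product $x_\infty\in\overline{U_{\!\mathfrak{g}}}\setminus U_{\!\mathfrak{g}}$ is left as an ``obstacle'' you do not resolve, and the paper does not proceed this way at all. Instead it uses the criterion (from Baumgartner--Willis) that $U_h$ is closed if and only if $\overline{U}_h\cap\overline{U}_{h^{-1}}$ is trivial, and then exhibits a \emph{single} root group inside $U_h\cap U_{h^{-1}}$. Two ingredients make this work. The first is a purely Coxeter-theoretic fact: when $(W,S)$ is irreducible and neither spherical nor affine, for any geodesic line $l'$ in $|\mathbb{A}|$ there exist three pairwise disjoint, pairwise non-opposite roots $(\alpha,\beta,\gamma)$ with the two ends of $l'$ in the interiors of $\alpha$ and $\beta$; this ``fundamental hyperbolic configuration'' is where the non-affine hypothesis actually enters, not in any orbit-accumulation picture. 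The second ingredient is the (FPRS) property of Kac-Moody root groups (Caprace--R\'emy): the fixed-point set of $U_{-\gamma}$ grows without bound as one moves into $-\gamma$. Since both ends of $l'$ lie in $-\gamma$, the geometric membership criterion gives $U_{-\gamma}\subseteq U_{h'}\cap U_{{h'}^{-1}}$ immediately, and conjugating back yields the result. You are missing both of these inputs, and without them your infinite-product approach has no clear mechanism for producing an element provably outside $U_{\!\mathfrak{g}}$.
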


The second assertion of                         
Theorem~\ref{thm:MainThm} is
in sharp contrast
with the known results
about contraction groups of
elements in spherical and affine Kac-Moody groups.
In particular,
all contraction groups of inner automorphisms
are closed for semisimple algebraic
groups over local fields;
this follows from
the representation
of contraction groups
as rational points
of unipotent radicals
and we direct the reader to
part~2 of Proposition~\ref{prop:contrGs(spherical,known-affine)}
for a slightly more general statement.

Consequently, all contraction groups
of inner automorphisms are closed for certain affine
Kac-Moody groups, namely those that are
geometric completions of Chevalley group schemes over the
rings of Laurent polynomials over finite fields.
For completions
of Kac-Moody groups
of any spherical type
the same
is seen
to be true;
see                                     
part~1 of Proposition~\ref{prop:contrGs(spherical,known-affine)}.

Thus Theorem~\ref{thm:MainThm} 
and 
Proposition~\ref{prop:contrGs(spherical,known-affine)}
provide another instance
of the strong dichotomy
between Euclidean
and
arbitrary non-Euclidean buildings
with large automorphism groups
which is already evident in        
results
such as
the Simplicity Theorem
in~\cite{simpl+superrig(twin-building-lattices)}
and
the strong Tits alternative
for infinite irreducible Coxeter groups
by Margulis-Noskov-Vinberg%
~\cite{linG-virt-free-quot,strongTitsAlt(<CoxeterG)}.

%UDO: reordered and reformulated the remaining paragraphs, 
The groups covered by
the second part of our Main Theorem
are 
topologically simple%
~\cite{ts(Kac-Moody)+commensurator}, %
%UDO: for the following we would need to refer to Carbone, Ershov and Ritter 
indeed in many cases
 algebraically simple%
~\cite{CarErsRit} groups,  
whose 
flat rank
assumes 
all positive integral values~\cite{flat-rk(autGs(buildings))}, 
and indeed are 
the first known groups 
who have 
non-closed contraction groups 
and 
whose flat rank 
can be larger than~$2$; 
we refer the reader to%
~\cite{tidy<:commAut(tdlcG),flat-rk(autGs(buildings))}
for the definition of flat rank. 
They are thus 
`larger' 
but similar to 
the group 
of type-preserving isometries 
of a regular, locally finite tree,  
which is 
a 
simple, 
totally disconnected, 
locally compact 
group  
of flat rank~1, 
whose 
non-trivial contraction groups 
are non-closed. 
This follows from 
Example~3.13(2) in~\cite{contrG+scales(AUT(tdlcG))} 
and 
Remark~\ref{rem:subgroup-contractionGs+parabolics_geo}.

%UDO: I believe that this is now redundant 
%Geometrically, our examples
%differ from earlier ones in that
%they apply to groups acting on buildings
%which are neither spherical nor affine.

%	%UDO: suggest to omit this example all-together 
%	Examples of contraction groups
%	which are not closed are already known.
%	%
%	A special case of example~3.13(3) in%
%	~\cite{contrG+scales(AUT(tdlcG))}
%	is as follows;                                                %changed
%	%The following example already appeared
%	%in~\cite{tdlcG.structure}.
%	suppose $F$ is a finite group
%	and $G=F^\mathbb{Z}$.
%	Define a shift automorphism $\alpha$ on $G$ %UDO: omit `shift' 
%	via $\alpha(f)_n=f_{n-1}$.
%	%
%	Denoting by $e$ the identify of $F$,
%	the sets
%	\[
%	V_N=\{f\in F^\mathbb{Z}\colon f_{-N}=f_{-N+1}=\cdots=f_{N+1}=f_{N}=e\}
%	\]
%	for $N\in\mathbb{N}$
%	are compact open subgroups
%	of $G$ and
%	form a base of neighbourhoods of the identity.
%	%
%	The contraction group of $\alpha$ is
%	\[
%	U_\alpha=\{ f\in F^\mathbb{Z} \colon
%	\text{ there exists } N\in\mathbb{N}
%	\text{ such that } f_{-n}=e
%	\text{ for all } n>N\}
%	\]
%	which we can identify with $(\bigoplus_{n<0}F)\oplus(\prod_{n\geq 0}F)$.
%	%
%	The group $U_{\alpha}$ is not closed %UDO: omit rest of this sentence 
%	and the only subgroup
%	tidy for $\alpha$
%	is $G$ itself.

%
The Main Theorem
will be proved
within the wider framework
of groups 
with
a locally finite twin root datum.                       
Within
this wider framework
we need to impose
the additional assumption
that
the root groups
of the given root datum
are contractive
(a condition
introduced in Subsection~\ref{subsec:contr-rootGs})
in order
to be able
to prove
the analogue of
the second statement above.
In the Kac-Moody case
this condition
is automatically fulfilled
by a theorem
of Caprace and R\'emy.
In all cases,                        %changed
the geometry
of the underlying Coxeter complex
will
%enter
%decisively                         %changed
play a crucial role
in the proof
%of our Main Theorem,         %changed
via the existence
of `a fundamental hyperbolic configuration',
see Theorem~\ref{thm:fundamental-hyperbolic-configuration_line}.

\section{Framework}
\label{sec:frame}

% introduce the objects of our study
We study complete Kac-Moody groups; these were         %changed
introduced in~\cite{tG(Kac-Moody-type)+rangle-twinnings+:lattices}
under the name `topological Kac-Moody groups'.
A complete Kac-Moody group
is a geometrically defined                                   %changed
completion
of an abstract Kac-Moody group
over a finite field.
Every Kac-Moody group
is a group-valued functor,
$\mathbf{G}$ say,
on rings,
which is defined by
a Chevalley-Steinberg type presentation,
whose main parameter
is an integral matrix,
a `generalized Cartan matrix',
which
also defines
a Coxeter system
of finite rank;
see
\cite[Subsection~3.6]{unique+pres(Kac-MoodyG(F))}
and
\cite[Section~9]{GKac-Moody-depl+presque-depl}
for details.
For each ring $R$,
the value
$G:=\mathbf{G}(R)$
of the functor $\mathbf{G}$
on $R$ is an
\emph{abstract Kac-Moody group over $R$}.

For each field %MAYBE EVEN FOR EVERY ring, %changed
$R$ the
Chevalley-Steinberg presentation
endows                        %
the abstract Kac-Moody group $\mathbf{G}(R)$
with the structure
of a \emph{group with a twin root datum},
which is
%the way
%we will view
%the abstract Kac-Moody group $\mathbf{G}(R)$  %changed
%in this paper.
the context in which our results are stated.           %changed
A twin root datum
is
a collection $\bigl((U_\alpha)_{\alpha\in \Phi},H\bigr)$
of subgroups
of $G$
indexed by the set
$\Phi$ of roots of                                         %changed
the associated Coxeter system $(W,S)$
and satisfying certain axioms
which ensure that
the group $G$
acts on
a `twinned' pair of buildings
of type $(W,S)$;
see~\cite[1.5.1]{GKac-Moody-depl+presque-depl}.
See Subsection~0.3,
respectively~0.4,
in~\cite{simpl+superrig(twin-building-lattices)}
for the list of axioms
of a twin root datum
and references
to further literature
on twin root data
and twin buildings.

In order to define
the \emph{geometric completion}
of $\mathbf{G}(R)$,
assume that
%the ring
$R$
is a finite field.
Under this assumption
all the groups
which constitute
the natural root datum
of $\mathbf{G}(R)$
are finite;
groups
with a twin root datum
having this property
will be called
\emph{groups with a locally finite twin root datum}.
The \emph{Davis-realization}
of the buildings
defined by
a locally finite twin root datum
are locally finite,
metric,
$\CAT0$-complexes
in the sense of \cite{GMW319}
all of whose cells
have finite diameter;                                                 %changed
see~\cite[Section~1.1]{flat-rk(autGs(buildings))}
for a short explanation
following M. Davis' exposition in~\cite{buildings=CAT0}.
The \emph{geometric completion}
of a group
$G$
with locally finite twin root datum
is the closure
of the image
of $G$
in the
automorphism group
of the Davis-realization
of the positive building
defined by
the given root datum;
if $G$ is
an abstract Kac-Moody group
over a finite field
that completion
will be called
the corresponding  \emph{complete Kac-Moody group}
and denoted by~${\overline{G}}$.

The completion
of an abstract Kac-Moody group
is defined by %UDO: changed `was' to `is' 
its action on its building
and
our techniques 
rely on           
the $\CAT0$-geometry
of the building,
in particular
the action of the group
`at infinity'.
However,
note that
the topology
and the completion
of a group with locally finite twin root datum
do \underline{not}
depend on the $\CAT0$-structure,
only on
the combinatorics
of the action
on the building;
see Lemma~2 in~\cite{flat-rk(autGs(buildings))}.
Therefore
one should be able
to dispense with
the use of
the Davis-realization below.

We summarize
the basic topological properties of
automorphism groups
of locally finite complexes
in the following proposition.

\begin{proposition}\label{prop:topProp(Aut(lf-complex)}
Let $X$ be
a connected, locally finite cell complex.
Then
the com\-pact-open topology on
%the automorphism group of $X$,
$\Aut X_.$ %,
is a
locally compact,
totally disconnected (hence Hausdorff)
group topology.
This topology
has a countable basis,
hence is $\sigma$-compact and metrizable.
Stabilizers and fixators
of finite subcomplexes
of $X$
in $\Aut X_.$
are
compact, open subgroups
of $\Aut X_.$
and
the collection of
all fixators
of finite subcomplexes
form
a neighborhood basis
of the identity
in $\Aut X_.$.
These statements
are also true
for closed subgroups
of $\Aut X_.$.

Any closed subgroup,
$\overline{G}$ say,
of $\Aut X_.$,
which admits
a finite subcomplex
whose $\overline{G}$-translates
cover $X$,
is %in addition
compactly generated
and
cocompact in $\Aut X_.$.
\end{proposition}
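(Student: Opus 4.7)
The plan is to deduce everything from two basic facts: that fixators of finite subcomplexes form a neighborhood basis of the identity in the compact-open topology, and that such fixators are compact. Once these are in hand, the remaining global properties---local compactness, total disconnectedness, Hausdorffness, second countability, metrizability, $\sigma$-compactness---are formal consequences, and they pass to closed subgroups by intersection.

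To establish the neighborhood basis, I would describe the compact-open topology via the subbasic sets $N(C,U)=\{f:f(C)\subseteq U\}$ with $C$ compact and $U$ open, and observe that because an automorphism maps cells bijectively to cells, a closed cell $\bar c$ together with a sufficiently small open neighborhood $U_c$ of $\bar c$ (meeting no other closed cell of equal or smaller dimension entirely) forces every $f\in N(\bar c,U_c)$ to fix $\bar c$; intersecting such sets over the finitely many cells of a finite subcomplex $K$ yields the fixator of $K$. Conversely, since any compact subset of a locally finite complex meets only finitely many cells, every compact-open neighborhood of the identity contains such a fixator. That the resulting topology is a group topology is the standard fact that the compact-open topology on the homeomorphism group of a locally compact Hausdorff space is a topological group.

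For compactness of fixators, let $K=K_0\subseteq K_1\subseteq K_2\subseteq\cdots$ be the successive combinatorial neighborhoods of a finite subcomplex $K$, each finite by local finiteness. The fixator of $K$ in $\mathsf{Aut}(X)$ embeds as a closed subgroup of $\varprojlim_n \mathsf{Aut}(K_n\text{ rel }K_0)$, a cofiltered limit of finite groups, hence profinite, hence compact. Since $X$ is connected and locally finite it has countably many cells, so countably many finite subcomplexes, and hence a countable neighborhood basis at the identity; the remaining global attributes then follow from standard theorems on locally compact second countable groups. Total disconnectedness is immediate from having a neighborhood basis of compact open subgroups, and Hausdorffness from faithfulness of the action on the discrete vertex set. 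Restriction to a closed subgroup preserves all of these properties automatically.

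For the final assertion, fix a finite subcomplex $K$ with $\overline{G}\cdot K=X$ and a vertex $v\in K$. The set $P=\{f\in\mathsf{Aut}(X):f(v)\text{ is a vertex of }K\}$ is a finite union of left cosets of the compact stabilizer of $v$, hence compact, and $\mathsf{Aut}(X)=\overline{G}\cdot P$ by the covering hypothesis; this gives cocompactness of $\overline{G}$ in $\mathsf{Aut}(X)$. For compact generation, the compact set $F=\{g\in\overline{G}:g(K)\cap K\neq\emptyset\}$ generates $\overline{G}$ by a standard connectedness argument: any translate $g(K)$ can be reached from $K$ by a chain of overlapping $\overline{G}$-translates of $K$ (using that $X$ is connected), and each consecutive step lies in $F$. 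The main obstacle is purely verificational rather than conceptual---checking that the various natural topologies (compact-open on $\mathsf{Aut}(X)$, pointwise convergence on the set of cells, profinite on cell-fixators) all agree; the proposition essentially packages folklore facts about automorphism groups of locally finite discrete structures.
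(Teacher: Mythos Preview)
The paper does not actually supply a proof of this proposition: it is stated as a summary of standard facts about automorphism groups of locally finite complexes and is immediately followed by the remark that complete Kac-Moody groups therefore enjoy all the listed properties. So there is no ``paper's own proof'' to compare against; your proposal is an attempt to fill in what the authors leave as folklore.

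Viewed on its own, your outline is essentially the standard argument and is sound. Two small points you should tighten. First, in deducing that $f\in N(\bar c,U_c)$ implies $f$ fixes $\bar c$, your phrasing yields only that $f$ stabilizes $\bar c$ setwise; to get the pointwise fixator of a finite subcomplex you should work with the vertex set (the $0$-cells are points, so setwise stabilization is pointwise fixation) and then argue that a cellular automorphism fixing all vertices of a cell fixes the cell pointwise---this is unproblematic for simplicial or polyhedral complexes such as the Davis realization, but would need a word for arbitrary CW complexes. Second, in the cocompactness step you pass from $\overline{G}\cdot K=X$ to $\mathsf{Aut}(X)=\overline{G}\cdot P$; make explicit that given any $f\in\mathsf{Aut}(X)$, the vertex $f(v)$ lies in some $g\cdot K$ with $g\in\overline{G}$, whence $g^{-1}f\in P$. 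Neither of these is a genuine gap, just places where the write-up could be more careful. Your closing remark that the proposition ``packages folklore facts'' is exactly the authors' implicit stance.
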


Complete Kac-Moody groups
hence
have all the properties
described above,
including
compact generation
and
co-compactness
in the full automorphism group
of its building
even though
we will not use
the latter
two properties
in this paper.

\section{Geometric reformulation of topological group concepts}
\label{sec:geometrization(problem)}
In what follows,
we reformulate
topological group concepts
in geometric terms,
that is
in terms of
the action
on the building.
We begin with
a geometric reformulation
of relative compactness.

A closed subgroup
$\overline{G}$                                                  %changed
of the automorphism group
of a connected, locally finite, metric complex $X$
carries
two natural structures
of bornological group.

The first bornological group structure
on $\overline{G}$
is
the natural bornology
induced by
its topological group structure,
and
consists of
the collection of
all \emph{relatively compact} subsets
of the group $\overline{G}$.

The second bornological group structure
on $\overline{G}$
is
the bornology
induced by
the natural bornology on
the metric space $X$,
in which subsets of $X$ are bounded
if and only if
they have finite diameter;
this bornology
on the group $\overline{G}$
consists of
the collection of
subsets $M$
of $\overline{G}$
which have the property that
for every bounded subset $B$
of $X$
the set $M.B$ is also bounded.
One can verify that
the latter condition
on the subset $M$
of $\overline{G}$
is equivalent to
the condition that
for some, and hence any, point            %changed
$x$
of $X$
the set $M.x$ is bounded.
We will call
the sets
in the second bornology
on the group $\overline{G}$
\emph{bounded} sets.

We now  verify                              %changed
that these two bornologies
coincide.
For subsets $Y$, $W$                               %changed
of  the metric space $X$ define \quad
%\[
$
\Trans YW_{\overline{G}}.:=\{g\in \overline{G}\colon g.Y\subseteq W\}
$.
%\]
%
Note that
\[
\Trans {\{y\}}{\{w\}}_{\overline{G}}.=
\begin{cases}
g_{wy}\overline{G}_y=\overline{G}_wg_{wy}=
\overline{G}_wg_{wy}\overline{G}_y &
\text{if }\exists\, g_{wy}\in \overline{G}\colon g_{wy}.y=w\\
\varnothing& \text{else}
\end{cases}
\]
Hence,
whenever $\overline{G}$ is
a closed subgroup of
the automorphism group
of a connected, locally finite complex $X$
and $y$, $w$ are
points of $X$,
the set  $\Trans {\{y\}}{\{w\}}_{\overline{G}}.$
will be compact and open.

\begin{lemma}[geometric reformulation of `relatively compact']\label{lem:rel-compact_geo}
Let
$X$ be
a connected, locally finite, metric complex,
and assume that
$\overline{G}$ is
a closed subgroup of $\Aut X_.$
equipped with the compact-open topology.
Then
a subset of $\overline{G}$ is relatively compact
if and only if it is bounded.
\end{lemma}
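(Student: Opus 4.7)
The plan is to prove both implications by exploiting the transporter sets $\Trans YW_{\overline{G}}.$. The key inputs are the observation noted just before the lemma (that each singleton-to-singleton transporter is compact and open) together with the fact that, for each $x\in X$, the orbit map $\operatorname{ev}_x\colon\overline{G}\to X$, $g\mapsto g.x$, is continuous for the compact-open topology.

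For the implication `relatively compact implies bounded', I would fix a point $x\in X$. If $M\subseteq\overline{G}$ is relatively compact then its closure $\overline{M}$ is compact, so $\overline{M}.x=\operatorname{ev}_x(\overline{M})$ is a compact subset of $X$. In a connected, locally finite metric complex (with cells of finite diameter) compact subsets have finite diameter, so $M.x$ is bounded; by the equivalence of the two descriptions of boundedness remarked upon in the excerpt this means $M$ itself is bounded.

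For the converse, I would pick $x$ to be a vertex, so that $M.x$ consists entirely of vertices of $X$, since elements of $\overline{G}$ are cellular automorphisms. Boundedness of $M$ makes $M.x$ a bounded set of vertices, and local finiteness forces any such set to be finite; write $M.x=\{w_1,\dots,w_n\}$. Then
\[
M\subseteq\bigcup_{i=1}^{n}\Trans{\{x\}}{\{w_i\}}_{\overline{G}}.,
\]
which exhibits $M$ as a subset of a finite union of compact sets, hence as a relatively compact subset of $\overline{G}$.

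The main --- and essentially only non-formal --- point to justify carefully is the geometric fact that bounded sets of vertices in a connected, locally finite metric complex are finite; equivalently, that every closed metric ball meets only finitely many cells. This follows by induction on combinatorial distance from local finiteness together with the finiteness of cell diameters, and is independent of the group action. Once it is in hand, both directions of the lemma reduce to a one-line application of the transporter observation and of continuity of evaluation.
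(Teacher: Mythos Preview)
Your proof is correct and takes essentially the same approach as the paper. The only cosmetic difference is in the direction ``relatively compact $\Rightarrow$ bounded'': the paper covers $\overline{M}$ by the open transporter sets $\Trans{\{x\}}{\{y\}}_{\overline{G}}.$ and extracts a finite subcover to conclude that $M.x$ lies in a finite set, whereas you push $\overline{M}$ forward along the continuous evaluation map and use that compact subsets of a metric space are bounded; the converse direction is handled identically in both.
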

\begin{proof}
We will use
the criterion
that a subset $M$ of $\overline{G}$ is bounded
if and only if,
for some chosen vertex,
$x$ say,
the set $M.x$
is  bounded.

Assume first that
$M$ is
a bounded subset of $\overline{G}$.
%
%By definition
%of the term bounded,
This means that
$M.x$ is a bounded,
hence finite
set of vertices.
We conclude that
\[
M\subseteq \bigcup_{y\in M.x}\Trans {\{x\}}{\{y\}}_\overline{G}.\,,
\]
which shows that
$M$ is
a relatively compact subset
of $G$.

Conversely,
assume that
$M$ is a relatively compact subset
of $\overline{G}$.
We have
\[
M\subseteq \bigcup_{y\in X}\Trans {\{x\}}{\{y\}}_\overline{G}.\,.
\]
and,
since $M$ is relatively compact,
there is a finite subset $F(M,x)$
of $X$
such that
\[
M\subseteq \bigcup_{y\in F(M,x)}\Trans {\{x\}}{\{y\}}_\overline{G}.=:T(M,x)\,.
\]
We conclude that
$M.x\subseteq T(M,x).x\subseteq F(M,x)$
which shows that
$M$ is bounded.
\qed
\end{proof}

\subsection{Geometric reformulation of topological properties of isometries}\label{subsec:isometries}
Under the additional condition
that the complex $X$
carries a $\CAT0$-structure,
we use
the previous result
to reformulate
the topological condition
on a group element
to be (topologically) periodic
in dynamical terms.

\begin{lemma}[weak geometric reformulation of `topologically periodic']\label{lem:elements_geo}
Let
$X$ be
a connected, locally finite, metric $\CAT0$-complex.
Equip $\Aut X_.$ with
the compact-open topology
and
let $g$ be
an element of $\Aut X_.$.
Then
$g$ is topologically periodic
if and only if
$g$ has a fixed point.
\end{lemma}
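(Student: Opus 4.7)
I would prove the two implications separately, using Lemma~\ref{lem:rel-compact_geo} as the bridge between the topological condition (relative compactness) and its geometric counterpart (boundedness of orbits), and the Bruhat--Tits/Cartan fixed-point theorem for complete $\CAT0$-spaces as the main geometric input on the non-trivial side.

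For the easy direction, suppose $g.x = x$ for some $x\in X$. Then $g^n.x = x$ for every $n\in\mathbb{Z}$, so the orbit $\langle g\rangle.x = \{x\}$ has diameter zero. Lemma~\ref{lem:rel-compact_geo} then gives that $\langle g\rangle$ is relatively compact in $\Aut X_.$, so $\overline{\langle g\rangle}$ is a compact subgroup containing $g$, which means that $g$ is topologically periodic.

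For the converse, suppose $g$ is topologically periodic, so that $\overline{\langle g\rangle}$ is compact and in particular $\langle g\rangle$ is relatively compact in $\Aut X_.$. By Lemma~\ref{lem:rel-compact_geo} the set $\langle g\rangle$ is then bounded, so for any chosen vertex $x\in X$ the orbit $\langle g\rangle.x$ has finite diameter. Since $X$ is a complete $\CAT0$-space (local finiteness together with cells of bounded diameter ensures completeness), the Bruhat--Tits/Cartan fixed-point theorem applies: any group of isometries of a complete $\CAT0$-space that admits a bounded orbit fixes the circumcenter of that orbit. Applying this to $\overline{\langle g\rangle}$ produces a point fixed by $g$, and hence by $g$ itself.

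The delicate point of the argument is the appeal to the $\CAT0$ fixed-point theorem: it requires completeness of $X$ and the existence and uniqueness of circumcenters for bounded subsets, properties which hold in the setting of the paper but should be flagged as the real content. Once these are granted, the remainder of the proof is a two-line translation through Lemma~\ref{lem:rel-compact_geo}; a minor subtlety is that topological periodicity means compactness of $\overline{\langle g\rangle}$ as a subgroup (equivalently, relative compactness of $\{g^n:n\in\mathbb{Z}\}$), which is precisely the form needed to feed into that lemma.
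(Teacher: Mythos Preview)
Your proof is correct and follows essentially the same route as the paper: use Lemma~\ref{lem:rel-compact_geo} to translate between relative compactness and boundedness, invoke the $\CAT0$ fixed-point theorem for the forward direction, and observe that a fixed point forces $\langle g\rangle$ to sit in a compact set for the converse. The only cosmetic difference is that for the converse the paper appeals directly to compactness of the point stabilizer $\Aut X_._x$ (from Proposition~\ref{prop:topProp(Aut(lf-complex)}) rather than passing through Lemma~\ref{lem:rel-compact_geo} again, but the content is the same.
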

\begin{proof}
By Lemma~\ref{lem:rel-compact_geo},
$g$ is topologically periodic
if and only if
the group generated by $g$
is bounded.
Since
a bounded group of automorphisms
of a complete $\CAT0$-space
has a fixed point,
topologically periodic elements
have fixed points.

Conversely,
if $g$ fixes the point $x$ say,
then $g$,
and the group it generates,
is contained in
the compact set $\Aut X_._x$.
Hence
$g$ is topologically periodic.
\qed
\end{proof}

One can even
detect
the property of
being topologically periodic
in a purely geometric way:
isometries of $\CAT0$-spaces
which do not have fixed points
are either
parabolic or hyperbolic.
If,
in the previous lemma,
we impose
the additional condition
that
the complex $X$
should have
finitely many isometry classes of cells,
then
$X$
is known to have
no parabolic isometries
and
we obtain the following
neat characterization.

\begin{lemma}[strong geometric reformulation of `topologically periodic']
\label{lem:elements_geo+}
Let
$X$ be
a connected, locally finite, metric $\CAT0$-complex
with finitely many isometry classes of cells.
Equip $\Aut X_.$ with
the compact-open topology
and
let $g$ be
an element of $\Aut X_.$.
Then the following properties are equivalent:
\begin{enumerate}
\item
$g$ is topologically periodic;
\item
$g$ has a fixed point;
\item
$g$ is not hyperbolic.
\end{enumerate}
\end{lemma}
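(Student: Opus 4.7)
The plan is to leverage Lemma~\ref{lem:elements_geo} to get the equivalence of (1) and (2) for free, and then upgrade it to the three-way equivalence by ruling out parabolic isometries under the additional finiteness assumption.

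First, I would observe that since the complex $X$ is a connected, locally finite, metric $\CAT0$-complex, the previous lemma applies verbatim and yields (1) $\Leftrightarrow$ (2). So only the equivalence of (2) and (3) remains to be established.

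Next, I would invoke the standard trichotomy for isometries of a complete $\CAT0$-space: every such isometry is either elliptic (i.e.\ has a fixed point), parabolic (i.e.\ its infimal displacement is not achieved), or hyperbolic (i.e.\ its infimal displacement is positive and achieved on an axis). The implication (2) $\Rightarrow$ (3) is immediate, since an elliptic isometry is not hyperbolic. For the converse, I would argue by the trichotomy: if $g$ is not hyperbolic, then it is either elliptic, which is exactly (2), or parabolic, which I must exclude.

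The main step — and the only place the finite-isometry-type hypothesis is used — is showing that $\Aut X_.$ contains no parabolic isometries. This is the standard semisimplicity result for cellular isometries of locally finite $\CAT0$-polyhedral complexes with only finitely many isometry classes of cells (see Bridson--Haefliger, \emph{Metric spaces of non-positive curvature}, Chapter II.6): the translation length function on such a complex takes only finitely many distinct infimal values on a given cell type, so the infimum of the displacement function of any cellular isometry is attained, forcing $g$ to be semisimple (elliptic or hyperbolic). Applying this to our $g \in \Aut X_.$, parabolicity is excluded, so (3) $\Rightarrow$ (2) as required.

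I do not anticipate any real obstacle beyond citing the semisimplicity theorem correctly; the combinatorial hypothesis on cells is exactly what is needed for it, and the two easy directions follow directly from the $\CAT0$ trichotomy and from Lemma~\ref{lem:elements_geo}.
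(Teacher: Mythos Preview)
Your proposal is correct and follows essentially the same route as the paper: the equivalence (1)$\Leftrightarrow$(2) is taken from Lemma~\ref{lem:elements_geo}, and (2)$\Leftrightarrow$(3) is obtained by invoking Bridson's semisimplicity theorem for $\CAT0$-complexes with finitely many isometry classes of cells (the paper cites \cite[II.6.6 Exercise~(2), p.~231]{GMW319}) to exclude parabolic isometries. The only difference is that you spell out the trichotomy explicitly, which the paper leaves implicit.
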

\begin{proof}
The assumption that
the complex $X$
has finitely many
isometry classes
of cells
implies that
no isometry of $X$
is parabolic
by a theorem of Bridson \cite[II.6.6 Exercise~(2) p.~231]{GMW319}.
This shows that
the second and third statement
of the lemma
are equivalent.
The first and the second statement
are equivalent
by Lemma~\ref{lem:elements_geo},
which concludes the proof.
\qed
\end{proof}

In the case
of interest to us,
we can add
a further characterization
of `topologically periodic'
to those
given above
and we include it for completeness                    %changed
although we will not need to use it.
The scale referred to in the statement
is defined as in~\cite{tdlcG.structure}                 %changed
and~\cite{furtherP(s(tdG))}.

\begin{lemma}[scale characterization of `topologically periodic']
\label{lem:top-periodic=scale1}
If $\overline{G}$ is
the geometric completion
of a group with locally finite twin root datum
(or the full automorphism group of its building)
%one can show that
the following statements
are also equivalent to
the statements~(1)--(3) of Lemma~\ref{lem:elements_geo+}:
\begin{enumerate}
\setcounter{enumi}{3}
\item
the scale value $s_{\overline{G}}(g)$ is equal to $1$;                 %changed
\item
the scale value $s_{\overline{G}}(g^{-1})$ is equal to $1$;            %changed
\end{enumerate}
Furthermore,
$s_{\overline{G}}(g)=s_{\overline{G}}(g^{-1})$
for all $g$ in $\overline{G}$.
\end{lemma}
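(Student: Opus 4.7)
The plan is to append characterizations (4) and (5) to the chain of equivalences supplied by Lemma~\ref{lem:elements_geo+} and then to prove the concluding equality. I would first dispose of the direction $(2)\Rightarrow (4)\wedge(5)$. If $g$ fixes a point $x$ of $X$, the stabilizer $\overline{G}_x$ is a compact open subgroup by Proposition~\ref{prop:topProp(Aut(lf-complex)}, and it is $g$-invariant since $g\overline{G}_x g^{-1}=\overline{G}_{gx}=\overline{G}_x$. Evaluating the defining minimum of the scale at $V=\overline{G}_x$ gives $s_{\overline{G}}(g)\leq [gVg^{-1}:gVg^{-1}\cap V]=1$, hence equality, and the same argument applied to $g^{-1}$ yields $s_{\overline{G}}(g^{-1})=1$.

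For the converse directions I would argue contrapositively, reducing via Lemma~\ref{lem:elements_geo+} to the claim that if $g$ is hyperbolic then $s_{\overline{G}}(g)>1$ (the companion claim for $g^{-1}$ following by the same argument since $g^{-1}$ is also hyperbolic). Let $L$ be a translation axis of $g$ and $v\in L$ a vertex. The geometric key is that some wall of the building separates $v$ from $g\cdot v$, producing roots whose finite, non-trivial root groups — supplied by the locally finite twin root datum — act non-trivially on the ``fresh'' portion of a ball centred at $v$ not contained in the corresponding ball around $g\cdot v$. Choosing $V$ to be the pointwise fixator of a sufficiently large finite subcomplex around $v$, the index $[gVg^{-1}:gVg^{-1}\cap V]$ counts, via the standard local combinatorics for automorphism groups of locally finite buildings, the cardinality of a non-trivial product of such root groups, and in particular exceeds $1$; minimizing over $V$ yields $s_{\overline{G}}(g)>1$.

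For the concluding equality $s_{\overline{G}}(g)=s_{\overline{G}}(g^{-1})$ I would invoke the standard identity $s_{\overline{G}}(g)/s_{\overline{G}}(g^{-1})=\Delta_{\overline{G}}(g)$ for the modular function of $\overline{G}$ and observe that $\overline{G}$ is unimodular: the dense subgroup $G$ is generated by the finite root groups and the finite torus of the twin root datum, so $\Delta_{\overline{G}}|_{G}$ is trivial and, by continuity, so is $\Delta_{\overline{G}}$ on the closure $\overline{G}$. The main obstacle in executing this plan is the geometric lower bound in the hyperbolic case: translating the existence of a separating wall into an actual strict inequality for the scale index requires the detailed structure of the twin root datum and the $\CAT{0}$ geometry of the building, while the forward direction and the modular-function argument are comparatively formal.
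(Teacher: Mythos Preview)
The paper's own proof is a bare citation: it invokes Corollaries~5 and~10 of \cite{flat-rk(autGs(buildings))}, which identify tidy subgroups for $\overline{G}$ with chamber fixators and compute the scale geometrically. Your proposal instead attempts a self-contained argument, so the approaches are entirely different; let me focus on whether yours goes through.

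The implication $(2)\Rightarrow(4)\wedge(5)$ is fine, and the unimodularity argument for the final equality is correct for $\overline{G}$ (with the minor caveat that for the parenthetical case of the \emph{full} automorphism group you have not justified unimodularity, since that group is not obviously generated by torsion elements; one typically argues instead via cocompactness of $\overline{G}$ or via a direct geometric computation).

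The genuine gap is in the hyperbolic direction. You exhibit \emph{one} compact open subgroup $V$ (a ball fixator) for which the index $[gVg^{-1}:gVg^{-1}\cap V]$ exceeds $1$, and then write ``minimizing over $V$ yields $s_{\overline{G}}(g)>1$''. But the scale is the \emph{minimum} of these indices over all compact open $V$, so a single large index proves nothing: a priori some other compact open subgroup could satisfy $gV'g^{-1}\subseteq V'$, forcing $s_{\overline{G}}(g)=1$. What must actually be shown is that a hyperbolic $g$ normalizes no compact open subgroup (equivalently, by Willis's theory, that $s_{\overline{G}}(g)=s_{\overline{G}}(g^{-1})=1$ is impossible). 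Your wall/root-group heuristic does not address this, because it never rules out the existence of such a $V'$. Closing this gap is exactly the content of the results cited from \cite{flat-rk(autGs(buildings))}: one shows that chamber fixators are tidy and reads off the scale as a product of root-group orders along a combinatorial translation, which is $>1$ precisely when $g$ is hyperbolic. Without either that identification of tidy subgroups or an independent argument excluding $g$-invariant compact open subgroups, the converse implication remains unproved.
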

\begin{proof}
This statement
follows form
Corollary~10 and Corollary~5
in~\cite{flat-rk(autGs(buildings))}.
\qed
\end{proof}

\subsection{Geometric reformulation of the topological definition of a contraction group}
\label{subsec:contrGs}

It follows from Lemma~\ref{lem:top-periodic=scale1}            %changed
and Proposition~3.24 in~\cite{contrG+scales(AUT(tdlcG))}
that in the geometric completion of a
group with locally finite twin root datum
contraction groups of topologically periodic elements
are bounded
while the contraction groups of elements
which are not topologically periodic are
unbounded.
In particular this observation applies
to topological Kac-Moody groups.

The following lemma
explains why
in this paper
we focus on                                          %changed
contraction groups
of non-topologically periodic elements.
Note that
we relax notation
and denote
the contraction group
of inner conjugation
with $g$
by $U_g$.

\begin{lemma}[contraction group of a topologically periodic element]
\label{lem:contrGs(top-periodic-elements)}
Suppose that
$g$ is
a topologically periodic element
of a locally compact group.
Then the contraction group $U_g$
is trivial
and hence closed.
\end{lemma}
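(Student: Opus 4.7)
The plan is to exploit the fact that topological periodicity of $g$ forces the subgroup $H := \overline{\langle g\rangle}$ to be compact, and to extract from this compactness a sequence of positive integers $m_j\to\infty$ such that $g^{m_j}\to e$. Once such a sequence is in hand, the conclusion is immediate: for any $x\in U_g$, the defining property of the contraction group gives $g^{m_j}xg^{-m_j}\to e$, while joint continuity of multiplication and continuity of inversion give $g^{m_j}xg^{-m_j}\to e\cdot x\cdot e=x$. Comparing the two limits forces $x=e$, so $U_g=\{e\}$. Since the ambient group is locally compact, hence Hausdorff, the trivial subgroup is automatically closed, which settles the `hence closed' clause.

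The only real work is producing the recurrence sequence. Because $H$ is compact, the sequence $(g^n)_{n\ge 1}$ admits at least one accumulation point $h\in H$; choose a subsequence $g^{n_k}\to h$ with $n_k\to\infty$. By passing to a sparser sub-subsequence we may assume that the gaps $n_{k+1}-n_k$ tend to infinity. Setting $m_j:=n_{j+1}-n_j$, continuity of multiplication and of inversion yields
\[
g^{m_j}=g^{n_{j+1}}\bigl(g^{n_j}\bigr)^{-1}\longrightarrow h\cdot h^{-1}=e,
\]
while $m_j\to\infty$ by construction. This is the step where the compactness of $\overline{\langle g\rangle}$ is genuinely used, and it is also where one sees why the statement would fail for a general element: without topological periodicity, one has no guarantee that the positive powers of $g$ return arbitrarily close to the identity.

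I expect no serious obstacle: the argument is elementary and relies only on the definition of topological periodicity, continuity of the group operations, and the Hausdorff property of the ambient locally compact group. The mildly delicate point is being careful to extract a subsequence of positive exponents (not negative), which is why one constructs $m_j$ as a difference of successive indices of a convergent subsequence rather than, say, applying inversion directly.
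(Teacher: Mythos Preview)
Your argument is correct and is more self-contained than the paper's, which does not prove the lemma directly but simply invokes Lemma~3.5 of Baumgartner--Willis \cite{contrG+scales(AUT(tdlcG))} as a black box (specialized to $v=g$, $d=e$). What you have written is essentially an unpacking of that citation in the relevant special case, and it makes transparent exactly where topological periodicity enters: compactness of $\overline{\langle g\rangle}$ forces the positive powers of $g$ to recur near $e$, after which the two computations of $\lim g^{m_j}xg^{-m_j}$ clash unless $x=e$.

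Two minor remarks. First, the differencing step is a detour: once you have $g^{n_k}\to h$, the subsequence $g^{n_k}xg^{-n_k}$ already converges both to $e$ (as a subsequence of a sequence tending to $e$) and to $hxh^{-1}$ (by continuity), so $hxh^{-1}=e$ and hence $x=e$; there is no need to arrange $h=e$. Second, the step ``$\overline{\langle g\rangle}$ compact $\Rightarrow$ $(g^n)$ has a convergent subsequence'' uses sequential compactness, which is not automatic for an arbitrary locally compact group. This is harmless in the paper's setting (the groups are metrizable by Proposition~\ref{prop:topProp(Aut(lf-complex)}), and in full generality one can either replace sequences by nets or observe that $e=\lim g^nxg^{-n}$ lies in the compact, hence closed, image of $\overline{\langle g\rangle}$ under the continuous map $h\mapsto hxh^{-1}$, giving some $h$ with $hxh^{-1}=e$.
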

\begin{proof}
This is
a special case
of Lemma~3.5 in~\cite{contrG+scales(AUT(tdlcG))}
where
$v=g$ and $d=e$.
\qed
\end{proof}

Membership in contraction groups
can be detected by
examining
the growth of
fixed point sets
while going to infinity.
The precise formulation is
as follows.

\begin{lemma}[geometric reformulation of  `membership in  a contraction group']
\label{lem:contractionGs_geo}
Let
$X$ be
a connected, locally finite, metric $\CAT0$-complex.
Equip $\Aut X_.$ with
the compact-open topology.
Suppose that
$h$ is
an hyperbolic isometry
of $X$
and
let $-\xi$ be
its repelling fixed point
at infinity.
%
%Further,
Let $l\colon \mathbb{R}\to X$ be a geodesic line
with $l(\infty)=-\xi$.

Then
an isometry $g$
of $X$
is in $U_h$
if and only if
for each $r>0$
there is
a real number $p(g,r)$
such that
all points
in $X$
within distance $r$
of the ray $l([p(g,r),\infty))$
are fixed
by $g$.
\end{lemma}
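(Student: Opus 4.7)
The plan is to interpret membership in $U_h$ via the compact-open topology and then match the resulting condition with the tube condition using standard CAT(0) bounds on asymptotic rays. By Proposition~\ref{prop:topProp(Aut(lf-complex)}, fixators of finite subcomplexes form a neighborhood basis of $e$ in $\Aut X_.$, so $g\in U_h$ is equivalent to: for every finite subcomplex $F\subseteq X$ there is $N$ such that $g$ fixes $h^{-n}(F)$ pointwise for all $n\geq N$.

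The geometric input is a tracking estimate. Pick an axis $A\colon\mathbb{R}\to X$ of $h$ with $h\cdot A(t)=A(t+\tau)$ and $A(-\infty)=-\xi$. Then $s\mapsto l(s)$ and $s\mapsto A(-s)$ are both rays to $-\xi$ in the CAT($0$) space $X$, hence asymptotic, so by the classical CAT($0$) fact that asymptotic rays lie within bounded distance there is a constant $C>0$ with $d(l(s),A(-s))\leq C$ for all $s\geq 0$ (after absorbing an inessential parameter shift into $C$). Using $h^n\cdot A(-s)=A(-s+n\tau)$, the triangle inequality yields
\[
d\bigl(h^{-n}(l(s)),l(s+n\tau)\bigr)\leq 2C\quad\text{and}\quad d\bigl(h^{n}(l(t)),l(t-n\tau)\bigr)\leq 2C,
\]
the second bound being valid whenever $t\geq n\tau$.

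For $(\Leftarrow)$, given a compact $K\subseteq X$, pick $D$ with $K\subseteq B(l(0),D)$; then $h^{-n}(K)\subseteq B(h^{-n}(l(0)),D)$ lies within distance $D+2C$ of $l(n\tau)$, hence in $N_{D+2C}\bigl(l([n\tau,\infty))\bigr)$. Applying the hypothesis with $r:=D+2C$ and taking $n$ so large that $n\tau\geq p(g,r)$, $g$ fixes $h^{-n}(K)$ pointwise, showing $g\in U_h$. For $(\Rightarrow)$, fix $r>0$ and pick a finite subcomplex $F$ containing the bounded set $N_{r+2C}(l([0,\tau]))$; such $F$ exists by local finiteness and the fact that cells have finite diameter. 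Choose $N$ with $g$ fixing $h^{-n}(F)$ for all $n\geq N$ and set $p(g,r):=(N+1)\tau$. If $y$ is within distance $r$ of $l(t)$ for some $t\geq p(g,r)$, then $n:=\lfloor t/\tau\rfloor\geq N$ and $t-n\tau\in[0,\tau)$, and the second tracking bound gives $d(h^n(y),l(t-n\tau))\leq r+2C$; hence $h^n(y)\in F$, so $y\in h^{-n}(F)$ is fixed by $g$.

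The main obstacle is the tracking estimate of the second paragraph. When $l$ is itself an axis of $h$ the bound is immediate (the two sequences coincide), but for a general $l$ with $l(\infty)=-\xi$ it rests on the non-trivial CAT($0$) fact that any two rays representing the same boundary point stay within bounded distance; this is the single place where the metric geometry of $X$, rather than merely its cellular or topological structure, enters essentially.
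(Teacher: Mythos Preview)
Your proof is correct and follows essentially the same idea as the paper's, only more explicitly. The paper's argument begins by observing that, because $l(\infty)=-\xi$, one may \emph{assume without loss of generality that $l$ is an axis of $h$}; after this reduction it simply notes that $g\in U_h$ is equivalent to the radius of the ball around $h^{-n}.l(0)$ fixed by $g$ tending to infinity, and since these centers are equally spaced along $l$, this is exactly the tube condition. Your tracking estimates against an auxiliary axis $A$ are precisely what justifies the paper's ``without loss of generality'': the bounded-distance property of asymptotic rays in $\CAT0$ spaces is the hidden content of that reduction, and you have unpacked it. The remaining steps --- translating convergence $h^ngh^{-n}\to e$ into fixing larger and larger balls (or finite subcomplexes) around the translated basepoints --- are the same in both arguments.
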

\begin{proof}
The assumption $l(\infty)=-\xi$
implies that
we may assume
without loss of generality
that
$l$ is an axis
of $h$.

Suppose now that
$g$ is
an isometry
of $X$
and
let $r(g,n)$ be
the radius of
the ball
around $P(g,n):=h^{-n}.l(0)$
that is fixed
by $g$,
with the convention that
$r(g,n)$ equals $-\infty$
if
$g$ does not fix
the point $P(g,n)$.
By
the definition of
the contraction group $U_h$
and
the topology
on $\Aut X_.$
the element $g$
is contained
in $U_h$
if and only if
$r(g,n)$ goes to infinity
as $n$ goes to infinity.

Since
$g$ is an isometry
and
$l$ is an axis
of $h$,
the points $P(g,n)$
for $n$ in $\mathbb{N}$
are equally spaced
along $l(\mathbb{R})$.
Therefore
we may reformulate
the condition
for membership
in $U_h$
given
at the end of
the last paragraph
as
in the statement of
the lemma.               %changed
\qed
\end{proof}

The results in~Lemma\ref{lem:contrGs(top-periodic-elements)},            %changed
Lemma~\ref{lem:elements_geo+}
and Lemma~\ref{lem:contractionGs_geo}
imply
the following dichotomy
for contraction groups.

\begin{lemma}[dichotomy for contraction groups]
\label{lem:dichotomy_contrGs}
If $X$ is
a connected, locally finite, metric $\CAT0$-complex
with finitely many isometry classes of cells
then
we have
the following dichotomy
for contraction groups
associated to isometries of~$X$.
\begin{itemize}
\item
Either                              %changed
the isometry
%under consideration
is elliptic
and its contraction group
is trivial,
\item
or
the isometry
%under consideration
is hyperbolic
and its contraction group
is the set of isometries
whose fixed point set grows
without bounds
when one approaches
its repelling fixed point
at infinity
as described in Lemma~\ref{lem:contractionGs_geo}.
\end{itemize}
\end{lemma}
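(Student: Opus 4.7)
The plan is to observe that this lemma is essentially a bookkeeping corollary of the three preceding results, Lemma~\ref{lem:contrGs(top-periodic-elements)}, Lemma~\ref{lem:elements_geo+}, and Lemma~\ref{lem:contractionGs_geo}. The entire content lies in packaging them and in noting that the general trichotomy of $\CAT0$-isometries (elliptic, parabolic, hyperbolic) collapses to a dichotomy under the standing hypothesis on $X$.

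First I would invoke Lemma~\ref{lem:elements_geo+}. The hypothesis that $X$ has finitely many isometry classes of cells is precisely what is needed to exclude parabolic isometries (via the cited theorem of Bridson), so every isometry $g$ of $X$ falls into exactly one of two mutually exclusive cases: either $g$ is elliptic — equivalently, $g$ fixes a point, equivalently, $g$ is topologically periodic — or $g$ is hyperbolic. This splits the analysis into the two branches of the dichotomy.

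In the elliptic branch, $g$ is topologically periodic by the equivalence just recalled, so Lemma~\ref{lem:contrGs(top-periodic-elements)} applies and yields that $U_g$ is trivial. In the hyperbolic branch, Lemma~\ref{lem:contractionGs_geo} directly identifies $U_g$ with the set of isometries all of whose $g$-orbits on a geodesic ray $l([p,\infty))$ approaching the repelling fixed point $-\xi$ grow tubes of arbitrarily large radius of fixed points; this is exactly the description demanded by the statement.

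There is essentially no obstacle here: the only thing to verify is that the hypotheses of each cited lemma are inherited from those of Lemma~\ref{lem:dichotomy_contrGs}, which is immediate by inspection. Should one wish to expand, the only minor subtlety worth flagging is that the two descriptions of isometries used in the statement (``elliptic'' and ``hyperbolic'') and the topological description used in the cited lemmas (``topologically periodic'' versus ``not topologically periodic'') are tied together precisely by Lemma~\ref{lem:elements_geo+}, which is what makes the geometric language of the conclusion available.
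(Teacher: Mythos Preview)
Your proposal is correct and mirrors the paper's own treatment: the paper states the lemma as an immediate consequence of Lemma~\ref{lem:contrGs(top-periodic-elements)}, Lemma~\ref{lem:elements_geo+}, and Lemma~\ref{lem:contractionGs_geo} without giving a separate proof. Your packaging of these three ingredients, including the observation that Lemma~\ref{lem:elements_geo+} is what collapses the $\CAT0$ trichotomy to a dichotomy and translates between the geometric and topological descriptions, is exactly the intended argument.
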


\subsection{Geometric reformulation of the topological definition of a parabolic group}
\label{subsec:parabolics}
Using
the compatibility result
between the natural bornologies
in Lemma~\ref{lem:rel-compact_geo}            %changed
we can also prove
a geometric characterization
for membership
in parabolic groups.
We again relax notation            %changed
and denote
the parabolic group
of inner conjugation
with~$g$
by~$P_g$.

\begin{lemma}[geometric reformulation of `membership in a parabolic group']
\label{lem:parabolics_geo}
Let
$X$ be
a connected, locally finite, metric $\CAT0$-complex.
Suppose that
$h$ is
a hyperbolic isometry
of $X$
and
let $-\xi$ be
its repelling fixed point
at infinity.
Then
$P_h$ is
the stabilizer of $-\xi$.
\end{lemma}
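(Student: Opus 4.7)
The plan is to translate the dynamical definition of $P_h$ into a displacement condition along an axis of $h$, and then to identify boundedness of that displacement with fixation of $-\xi$ at infinity via a standard $\CAT0$ asymptotic.

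First I would apply Lemma~\ref{lem:rel-compact_geo}: an isometry $g$ of $X$ lies in $P_h$ if and only if the conjugation orbit $\{h^n g h^{-n} : n\in\mathbb{N}\}$ is relatively compact, which by that lemma holds if and only if it is bounded, which in turn holds if and only if it acts with bounded orbit on some (equivalently, any) chosen basepoint $x_0\in X$. I would choose $x_0 := l(0)$ on a geodesic axis $l\colon\mathbb{R}\to X$ of $h$, parameterized so that $l(+\infty)=-\xi$. Writing $\tau>0$ for the translation length of $h$, one then has $h^{-n}.x_0 = l(n\tau)$, and since $h^n$ is an isometry,
\[
d\bigl(h^n g h^{-n}.x_0,\, x_0\bigr) \;=\; d\bigl(g.l(n\tau),\, l(n\tau)\bigr).
\]
Thus $g\in P_h$ is equivalent to the sequence $n\mapsto d(g.l(n\tau), l(n\tau))$ being bounded on $\mathbb{N}$.

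Next I would upgrade this discrete-sample bound to a bound on the whole continuous function $t\mapsto d(g.l(t), l(t))$ on $[0,\infty)$: since $g$ is an isometry and any $t\in[n\tau,(n+1)\tau]$ is within $\tau$ of $l(n\tau)$, the triangle inequality bounds the continuous supremum by the discrete one plus $2\tau$. So the condition $g\in P_h$ is equivalent to $t\mapsto d(g.l(t), l(t))$ being bounded on $[0,\infty)$.

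The final step uses the classical $\CAT0$-fact that the distance between two geodesic rays parameterized by arc length is a convex function of the parameter, hence bounded on $[0,\infty)$ precisely when the two rays are asymptotic, i.e.\ share the same point at infinity. The two rays in question are $l|_{[0,\infty)}$, with endpoint $-\xi$, and its $g$-image, with endpoint $g.(-\xi)$. Combining the three steps yields $g\in P_h \Longleftrightarrow g.(-\xi)=-\xi$, which is exactly the stabilizer of $-\xi$ in the automorphism group. No step is a genuine obstacle; the proof is simply the composition of (i)~the bornology equivalence of Lemma~\ref{lem:rel-compact_geo}, (ii)~the conversion of the conjugation orbit to a displacement along the axis, and (iii)~the convexity of the distance between geodesic rays in a $\CAT0$-space.
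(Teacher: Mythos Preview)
Your proof is correct and follows essentially the same approach as the paper: both use Lemma~\ref{lem:rel-compact_geo} to convert relative compactness of $\{h^ngh^{-n}\}$ into boundedness of $n\mapsto d(g.(h^{-n}.o),h^{-n}.o)$, and then identify this boundedness with $g$ fixing $-\xi$ via the $\CAT0$ boundary. The paper uses an arbitrary basepoint and simply appeals to ``the definition of points at infinity'' for both directions, whereas you specialize to a basepoint on an axis and make the converse explicit via convexity of the inter-ray distance; this is a mild elaboration rather than a different route.
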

\begin{proof}
Suppose first that
$g$ is
an element of
$P_h$.
Let $o$ be a point
of $X$.
% on an axis of $x$.
By
our assumption on $g$
and
by Lemma~\ref{lem:rel-compact_geo}
there is
a constant $M(g,o)$
such that
\[
d(h^ngh^{-n}.o,o)=d(g.(h^{-n}.o), (h^{-n}.o)) < M(g,o)\ \text{for all}\ n\in\mathbb{N}\,.
\]
But
the point $-\xi$
is %defined as
the limit of
the sequence $(h^{-n}.o)_{n\in \mathbb{N}}$
and
thus
by the definition
of points at infinity
of $X$
we infer
%from this condition            %changed
that
$g$ fixes $-\xi$.

Conversely,
assume that
$g$ fixes
the point $-\xi$.
The above argument
can be reversed
and
then shows that
$g$ is
contained in $P_h$.            %changed
%the proof is complete.
\qed
\end{proof}

There is
a dichotomy
for parabolic groups
that is
analogous to
the dichotomy
for contraction groups
obtained in Lemma~\ref{lem:dichotomy_contrGs};
the statement
is as follows.

\begin{lemma}[dichotomy for parabolic groups]
\label{lem:dichotomy_parabolics}
If $X$ is
a connected, locally finite, metric $\CAT0$-complex
with finitely many isometry classes of cells
then
we have
the following dichotomy
for parabolic groups
associated to isometries of $X$.
\begin{itemize}
\item
Either            %changed
the isometry
%under consideration
is elliptic
and its parabolic group
is the ambient group,
\item
or
the isometry
%under consideration
is hyperbolic
and its parabolic group
is the stabilizer
of its repelling fixed point
at infinity.
\end{itemize}
\end{lemma}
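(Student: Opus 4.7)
The plan is to reduce the dichotomy to two already-established pieces: the trichotomy of isometries of $X$ given by Lemma~\ref{lem:elements_geo+}, and Lemma~\ref{lem:parabolics_geo} for the hyperbolic case. The only work that remains is a short direct argument showing that the parabolic group of an elliptic isometry is the whole ambient group.

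First I would apply Lemma~\ref{lem:elements_geo+}: under the standing assumption that $X$ has only finitely many isometry classes of cells, every isometry of $X$ is either elliptic (equivalently, has a fixed point, equivalently, is topologically periodic) or hyperbolic, so the two alternatives of the dichotomy are exhaustive and mutually exclusive. This reduces the proof to two separate statements, one per alternative.

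In the hyperbolic alternative, there is nothing new to prove: the identification of $P_h$ with the stabilizer of the repelling fixed point at infinity is exactly the content of Lemma~\ref{lem:parabolics_geo}.

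In the elliptic alternative, let $g$ fix a point $p\in X$. I would show $P_g$ is the whole ambient group by a direct orbit estimate, using Lemma~\ref{lem:rel-compact_geo} to convert relative compactness into boundedness. For any element $x$ of the ambient group, since $g^{-n}$ also fixes $p$, one has $g^nxg^{-n}.p=g^nx.p$, and because $g$ is an isometry,
\[
d(g^nxg^{-n}.p,\,p)=d(g^nx.p,\,g^n.p)=d(x.p,\,p)\,.
\]
Hence the orbit $\{g^nxg^{-n}.p:n\in\mathbb{N}\}$ is bounded, so by Lemma~\ref{lem:rel-compact_geo} the set $\{g^nxg^{-n}:n\in\mathbb{N}\}$ is relatively compact, meaning $x\in P_g$. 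As $x$ was arbitrary, $P_g$ coincides with the ambient group, completing the elliptic case. No step here is really an obstacle — the substantive work was already absorbed into the preceding lemmas; the role of this proof is simply to assemble them and dispose of the easy elliptic case.
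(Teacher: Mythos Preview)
Your proof is correct and the overall structure---use Lemma~\ref{lem:elements_geo+} to split into the elliptic and hyperbolic alternatives, then invoke Lemma~\ref{lem:parabolics_geo} for the hyperbolic case---matches the paper exactly. The only difference is in how the elliptic case is handled. The paper does not argue directly; it cites Lemma~3.5 of~\cite{contrG+scales(AUT(tdlcG))} (with $v=g$, $d=e$), a purely topological-group statement to the effect that the parabolic group of a topologically periodic element is the whole ambient group. Your treatment instead gives a short geometric computation: using a fixed point $p$ of $g$ and the isometry property to show $d(g^nxg^{-n}.p,p)=d(x.p,p)$ is constant, then Lemma~\ref{lem:rel-compact_geo} to pass from boundedness to relative compactness. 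Your route is self-contained within the paper's own lemmas and avoids the external reference, at the cost of a couple of extra lines; the paper's route is terser but leans on outside machinery. Both are entirely valid, and your argument is arguably more in keeping with the geometric spirit of the section.
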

\begin{proof}
Applying
Lemma~3.5 in~\cite{contrG+scales(AUT(tdlcG))}
in the case
of  parabolic groups
with
$v=g$ and $d=e$
one sees that
parabolic groups
defined by
topologically periodic elements
are equal to
the ambient group;
this settles
the first possibility
listed above.
By Lemma~\ref{lem:elements_geo+}
an isometry that is not elliptic
must be hyperbolic
and then                             %changed
the parabolic group
has the claimed form
by Lemma~\ref{lem:parabolics_geo}.
\qed
\end{proof}

We conclude
this section
with the following
remark.

\begin{remark}
\label{rem:subgroup-contractionGs+parabolics_geo}
Suppose $G$ is a topological group,
$\mathfrak{g}\in\Aut G_.$
and $H$ is a $\mathfrak{g}$-stable
subgroup of $G$.
Then the contraction group of
$\mathfrak{g}$ in $H$ is
the intersection of the contraction group
of $\mathfrak{g}$ in $G$ with $H$;
an analogous statement 
is true for %UDO: changed `can be made about' to `is true for' 
the parabolic groups of $\mathfrak{g}$ 
%UDO: added 
within $H$ 
and $G$.
Thus
the geometric characterizations
of contraction groups and parabolics
given in
Lemmas~\ref{lem:contractionGs_geo} and~\ref{lem:parabolics_geo}
and
the dichotomies
described in
Lemma~\ref{lem:dichotomy_contrGs} and~\ref{lem:dichotomy_parabolics}
also hold for
subgroups of
$\Aut X_.$
for the specified
spaces $X$.
\end{remark}

\section{Outline of the proof of the Main Theorem}
\label{sec:proof-outline}

We know
from Lemma~\ref{lem:contrGs(top-periodic-elements)}
that contraction groups
of topologically periodic elements
are trivial
and hence closed.
This %already
proves
statement~1
of our Main Theorem.

Under the additional condition
on the type
of the Weyl group
given in statement~2,
%of our Main Theorem
we will show that
for any
non-topologically periodic element,
$h$ say,
of $\overline{G}$
%(equivalently,
%which induces a hyperbolic isometry)
the group $U_h\cap U_{h^{-1}}$
contains
a $\overline{G}$-conjugate
of a root group
from the natural root datum
for $G$.

\subsection{The criterion implying non-closed contraction groups}%
\label{subsec:criterion-non-closed-contrGs}
Theorem~3.32 in~\cite{contrG+scales(AUT(tdlcG))}
gives 12 equivalent conditions
for a contraction group
in a metric totally disconnected, locally compact group
to be closed.
By the equivalence of
conditions~(1) and~(4)
from Theorem~3.32 in~\cite{contrG+scales(AUT(tdlcG))}     %changed
the group $U_h$
is not closed
if and only if
the group $\overline{U}_{h}\cap  \overline{U}_{h^{-1}}$
is not trivial,
hence
the property
whose verification
we announced
in the previous paragraph
confirms statement~2
of our Main Theorem.
The proof of
this strengthening of
statement~2
of Theorem~\ref{thm:MainThm}
%which was mentioned
%in the introductory paragraphs
%to Section~\ref{sec:proof-outline}
proceeds in                        %changed
three steps.
\begin{enumerate}
\item
Firstly,
we show that
any geodesic line,
$l$ say,
can be moved
to a line $l'=g.l$
with image
in the standard apartment
by
a suitable element
$g$
of the completed group $\overline{G}$.
In what follows
we will be interested
only
in the case where
the line
$l$                                %changed
is an axis
of a hyperbolic isometry
$h\in\overline{G}$.                 
\item
Secondly,
we use %UDO: omitted `will'
the assumption
on the type of the Weyl group
to show that
for any geodesic line $l'$
in the standard apartment
there is
a triple of roots $(\alpha,\beta,\gamma)$
in ``fundamental hyperbolic configuration'' 
%UDO: added 
with respect to $l$.
By this we mean that                   
$\alpha,\beta$ and $\gamma$
are
pairwise non-opposite
pairwise disjoint roots,
such that
the two ends
of $l'$
are contained in
the respective interiors
of $\alpha$ and $\beta$. 
\item
Thirdly
and finally,
we use that
every
split or almost split
Kac-Moody group
has (uniformly)
contractive root groups,
a notion
introduced in Subsection~\ref{subsec:contr-rootGs} below,
to arrive at
the announced conclusion.
More precisely,                     
the geometric criterion
for membership
in contraction groups
is used %UDO: changed `can be' to `is' 
to show that
whenever
$h'$ is a hyperbolic isometry
in $\overline{G}$,
the line $l'$
is
an axis of $h'$
contained in
the standard apartment
and
the fundamental hyperbolic configuration $(\alpha,\beta,\gamma)$
is chosen
as mentioned
in the previous item,
then
the root group $U_{-\gamma}$
is contained in
the group $U_{h'}\cap U_{{h'}^{-1}}$.
\end{enumerate}

In terms of
the originally chosen
hyperbolic isometry $h$
and the element $g$
of $\overline{G}$
found
in step~1
above,
the conclusion
arrived at
after step~3
is that
$g^{-1}U_{-\gamma}g\subseteq U_h\cap U_{h^{-1}}$.

For our proof
to work,
we do not need
to assume
that
our original group $G$
is the abstract Kac-Moody group
over a finite field.
Step~1 
uses that
the group
is a completion of
a group with
a locally finite twin root datum,
Step~2 
uses a property
of the corresponding Coxeter complex
and Step~3
works for
groups with
a locally finite twin root datum
whose root groups are
contractive,
a notion
which
we introduce
now.

\subsection{Contractive root groups}
\label{subsec:contr-rootGs}

As explained above,
the following condition
will play a central role
in the proof of
our Main Theorem.
In the formulation
of that condition,
we denote
the boundary wall
of
the half-apartment
defined by
a root $\alpha$
by $\partial\alpha$,
as is customary.

\begin{definition}%[contractive root groups]
\label{def:contr-rootGs}
Let $G$ be
a group with % a locally finite
twin root datum $(U_\alpha)_{\alpha\in\Phi}$.
We say that
$G$ has \emph{contractive root groups}
if and only if
for all $\alpha$ in $\Phi$
we have:
If $x$ is
a point
in
the half-apartment
defined by $\alpha$,
then
the radius of
the ball
around $x$
which
is fixed pointwise by
%the root group
$U_\alpha$
goes to infinity
as
the distance
of $x$
to $\partial\alpha$ %the boundary wall
%of
%the half-apartment
%defined by $\alpha$
goes to infinity.
\end{definition}

The natural system
of root groups
of any split or almost split Kac-Moody group
satisfies
a stronger, uniform version
of the condition
of contractive root groups,
which we introduce now.
This latter condition
was called
condition~(FPRS)
in \cite{simpl+superrig(twin-building-lattices)},
where it was shown
in Proposition~4
that
any split or almost split Kac-Moody group
satisfies it.

\begin{definition}
\label{def:uniformly-contr-rootGs}
Let $G$ be
a group with % a locally finite
twin root datum $(U_\alpha)_{\alpha\in\Phi}$.
We say that
$G$ has \emph{uniformly contractive root groups}
if and only if
for each point $x$
in the standard apartment
of the positive building
defined by
the given root datum
and all roots $\alpha$ in $\Phi$
whose corresponding half-apartment
contains $x$,
the radius of the ball
which
is fixed pointwise by
%the root group
$U_\alpha$
goes to infinity
as
the distance
of $\partial\alpha$ %the boundary wall
%of the half-apartment
%defined by $\alpha$
to $x$
goes to infinity.
\end{definition}

\begin{remark}
% why the name
%We chose
%to rename
%condition~(FPRS)
%as above
%because,
By Lemma~\ref{lem:contractionGs_geo},
for a group,
$G$ say,
with
twin root datum $(U_\alpha)_{\alpha\in\Phi}$,
which has contractive root groups,
for any root $\alpha$
the root group $U_\alpha$
is contained in
the contraction group
of any element $g$
of $G$
whose repelling point
at infinity
is
defined by
a geodesic ray
contained in
the interior of
the half-apartment
defined by $\alpha$.
The latter condition
will be instrumental
in showing
our main theorem.
\end{remark}

Abramenko and M\"uhlherr
constructed
an example
of a group with twin root datum
that does not have
uniformly contractive root groups.
However,
in that example
the effect
of fixed point sets
staying bounded
is obtained by
going towards infinity
along
a non-periodic path of
chambers.
Therefore,
it is not possible
to find
an automorphism
of the building
that translates
in the direction
of that path.

In discussions between the authors and       %changed
Bernhard M\"uhlherr
he asserted
%told us
that
a bound on
the nilpotency degree
of subgroups
of the group
with twin root datum
would imply that
fixed point sets
always grow
without bounds
along periodic paths.

\begin{remark}
It would be interesting
to define
and investigate
quantitative versions
of the notions
of contractive and uniformly contractive root groups
for groups
with locally finite twin root datum.
These quantitative versions
would specify
the growth
of the radius
of the ball
fixed by
a root group
as a function of
the distance of
the center of
that ball
from the boundary hyperplane.
We suspect that
this growth
might be linear
in all situations
if and only if
all contraction groups
of elements
in the geometric completion
of a group
with locally finite twin root datum
are closed.
\end{remark}

\section{Proof of the Main Theorem}

We will prove
the following
generalization
of our Main Theorem.

\begin{theorem}[strong version of the Main Theorem]
\label{thm:strongMainThm}
Let $G$ be a group
with a locally finite twin root datum
and $\overline{G}$ the closure of
the image of $G$
in the automorphism group
of its positive building.
Then the following are true:
\begin{enumerate}
\item
The contraction group
of any topologically periodic element
in $\overline{G}$
is trivial.
\item
If
the root groups
of $G$
are contractive
and the type of $G$
is irreducible
and
neither spherical nor affine
then
the contraction group
of any element
that is
not topologically periodic
in $\overline{G}$
is not closed.
\end{enumerate}
Furthermore
every element
of infinite order
in the Weyl group
of $G$
lifts to
a non-topologically periodic element
of $\overline{G}$;
in particular,
if % whenever
the Weyl group
of $G$
is not of spherical type,
then
the group $\overline{G}$
contains non-topologically periodic elements.
\end{theorem}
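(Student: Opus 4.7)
The plan is to dispatch the three assertions of Theorem~\ref{thm:strongMainThm} in turn. Assertion~(1) is handled immediately by Lemma~\ref{lem:contrGs(top-periodic-elements)} and requires no further work. For assertion~(2) I would implement the three-step strategy laid out in Section~\ref{sec:proof-outline}, and the ``Furthermore'' clause I would treat by lifting an infinite-order element of the Weyl group to a hyperbolic isometry of the building.

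For assertion~(2), fix a non-topologically periodic $h\in\overline{G}$. By Lemma~\ref{lem:elements_geo+} the element $h$ is a hyperbolic isometry of the Davis realization $X$ of the positive building, hence admits an axis $l$. First, using transitivity of $\overline{G}$ on apartments coming from the twin root datum structure, together with the building-theoretic fact that the two endpoints at infinity of a geodesic line determine an apartment containing that line, I would produce $g\in\overline{G}$ with $l':=g\cdot l$ lying in the standard apartment $A$. Second, I would invoke Theorem~\ref{thm:fundamental-hyperbolic-configuration_line} to extract a triple $(\alpha,\beta,\gamma)$ of pairwise non-opposite, pairwise disjoint roots in $A$ whose associated half-apartments are situated so that the two endpoints of $l'$ lie in the interiors of $\alpha$ and $\beta$ respectively. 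Third, pairwise disjointness forces $\alpha,\beta\subseteq -\gamma$, so $l'$ is eventually contained in $-\gamma$ in both directions and the distance from $l'(t)$ to $\partial\gamma$ tends to $\infty$ as $t\to\pm\infty$. Under the contractive-root-groups hypothesis this means that the radius of the $U_{-\gamma}$-fixed ball around $l'(t)$ grows without bound in both directions, so Lemma~\ref{lem:contractionGs_geo} yields $U_{-\gamma}\subseteq U_{h'}\cap U_{{h'}^{-1}}$ for $h':=ghg^{-1}$. Conjugating back by $g$ gives a non-trivial subgroup $g^{-1}U_{-\gamma}g\subseteq U_h\cap U_{h^{-1}}\subseteq \overline{U}_h\cap \overline{U}_{h^{-1}}$, and the equivalence of conditions~(1) and~(4) in Theorem~3.32 of~\cite{contrG+scales(AUT(tdlcG))} then forces $U_h$ not to be closed.

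I expect Step~2 to be the main obstacle: producing the fundamental hyperbolic configuration is a purely Coxeter-combinatorial statement and is the only place where the irreducibility and non-spherical/non-affine hypothesis genuinely enters, since in spherical or Euclidean apartments one cannot fit three pairwise disjoint and pairwise non-opposite half-spaces wrapped around an arbitrary line in the required way. Step~1 is standard building-theoretic bookkeeping with no conceptual difficulty, and Step~3 is a direct translation between the geometric membership criterion of Lemma~\ref{lem:contractionGs_geo} and the definition of contractive root groups.

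For the final assertion, let $w\in W$ be of infinite order and lift it to some $n_w\in\overline{G}$ via the Weyl-group normalizer provided by the twin root datum. By Lemma~\ref{lem:elements_geo+} it suffices to show that $w$ acts as a hyperbolic isometry on the Davis realization $A$ of the Coxeter complex. Point stabilizers of the $W$-action on $A$ are finite parabolic subgroups, so an infinite-order $w$ cannot have a fixed point. Since $A$ is a locally finite CAT(0) complex with finitely many isometry types of cells, no isometry of $A$ is parabolic by Bridson's theorem, and therefore $w$ must be hyperbolic. The very last clause then follows from the classical fact that every infinite Coxeter group contains an element of infinite order.
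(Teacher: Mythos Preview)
Your overall architecture---Lemma~\ref{lem:contrGs(top-periodic-elements)} for assertion~(1), the three-step program of Section~\ref{sec:proof-outline} for assertion~(2), and the hyperbolicity-of-$w$ argument for the final clause---matches the paper's proof. Steps~2 and~3 and the ``Furthermore'' argument are essentially as in the paper.

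The one genuine gap is your Step~1. You appeal to ``the building-theoretic fact that the two endpoints at infinity of a geodesic line determine an apartment containing that line,'' but this is not a standard fact and the stated justification does not work: in a general $\CAT0$ Davis realization there is no spherical building at infinity to invoke, and even where there is one, parallel geodesic lines share endpoints at infinity without lying in a common apartment, so endpoints at infinity cannot pin down an apartment through the given line. The paper's Proposition~\ref{prop:} avoids this entirely by a direct construction: first translate $l$ by an element of $G$ so that it meets the fundamental chamber $c_0$; then, each time the line leaves $\mathbb{A}$ through a wall, fold it back using an element of the corresponding root group fixing $c_0$. The partial products of the resulting infinite product all lie in the compact stabilizer of $c_0$, so a convergent subsequence furnishes the desired $g\in\overline{G}$. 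This argument makes essential use of the completion (the limit need not lie in $G$) and never assumes that $l$ sits in an apartment to begin with.

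One minor point on the final assertion: you establish that $w$ has no fixed point on $|\mathbb{A}|$, but the lift $n_w$ acts on all of $X$, so you must still rule out a fixed point in $X\smallsetminus|\mathbb{A}|$. The paper handles this via the nearest-point projection onto the complete convex subspace $|\mathbb{A}|$; alternatively, observing that an axis for $w$ in $|\mathbb{A}|$ is automatically an axis for $n_w$ in $X$ would also suffice.
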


The proof
of this theorem
will be obtained from
several smaller results
as outlined
in Subsection~\ref{subsec:criterion-non-closed-contrGs}
above.
%
%As
%noted there,                %changed
By Lemma~\ref{lem:contrGs(top-periodic-elements)},
we only need to prove
statement~2
and the
existence statement
for non-topologically periodic elements.

The
%outlined                                      %changed
first step
towards the proof
of statement~2
of Theorem~\ref{thm:strongMainThm}
is provided by
the following proposition.

\begin{proposition}[geodesic lines can be moved to the standard apartment]
\label{prop:}
Let $G$ a group
with
locally finite twin root datum.
Denote
by $\overline{G}$ the geometric completion
of $G$
defined by
the given root datum,
by $X$
the Davis-realization of
the corresponding positive building
and
by $\mathbb{A}$
the corresponding standard apartment.

If $l$ is a geodesic line in $X$,
then there is
an element $g$
in $\overline{G}$
such that
$g.l(\mathbb{R})$
is contained in $|\mathbb{A}|$
and
intersects the fundamental chamber.
\end{proposition}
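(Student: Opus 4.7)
The plan is to produce the required element $g$ in three stages. First, I would locate an apartment $\mathbb{A}'$ of the positive building whose geometric realization contains the entire line $l(\mathbb{R})$. Second, using the strong transitivity of $\overline{G}$ on the positive building, I would move $\mathbb{A}'$ onto the standard apartment $\mathbb{A}$. Third, I would arrange, using the Weyl group's action on $\mathbb{A}$, that the translated line actually crosses the fundamental chamber.

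For the first stage, the key fact is that in the CAT(0) Davis-realization of a building every geodesic line lies in a single apartment. The argument I would record is as follows: for each $n \in \mathbb{N}$ the two points $l(-n)$ and $l(n)$ are contained in some common apartment $\mathbb{A}_n$, and since $|\mathbb{A}_n|$ is a convex CAT(0) subspace, the unique geodesic segment joining $l(-n)$ to $l(n)$ -- namely $l([-n,n])$ -- is entirely contained in $|\mathbb{A}_n|$. Because all the $\mathbb{A}_n$ can be chosen to share the (finite) cell containing $l(0)$, and the building is locally finite, a standard compactness/diagonal extraction over the set of apartments through that cell yields an apartment $\mathbb{A}'$ containing all of $l(\mathbb{R})$. (Alternatively, one observes that the two endpoints $l(\pm\infty)$ of $l$ in the ideal boundary are necessarily opposite in the appropriate sense and hence jointly determine an apartment; this variant is most transparent in the affine case.)

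For the second and third stages, I would invoke the strong transitivity of $\overline{G}$ on pairs (apartment, chamber) of the positive building, which is inherited from the twin root datum of $G$ via density. Pick any chamber $C'$ of $\mathbb{A}'$ whose interior is met by $l$. Strong transitivity then produces an element $g\in\overline{G}$ sending the pair $(\mathbb{A}',C')$ to $(\mathbb{A}, C_{0})$, where $C_{0}$ is the fundamental chamber. By construction $g.l(\mathbb{R})$ lies in $|\mathbb{A}|$ and passes through $C_{0}$, as required.

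The main obstacle is the first stage. While it is elementary that any geodesic \emph{segment} in the Davis-realization lies in some apartment, extending this to a bi-infinite line needs a genuine limit or boundary argument, and for general Coxeter types (beyond the spherical and affine cases, where this is classical) one must lean on the CAT(0) geometry of the Davis complex together with the local finiteness of the apartment system. Stages two and three are routine once the apartment $\mathbb{A}'$ has been produced.
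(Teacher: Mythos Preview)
Your three–stage plan is sound in principle, but it takes a genuinely different route from the paper and, more importantly, you have mislocated the main obstacle.

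The paper does \emph{not} first enclose $l$ in an apartment. It moves $l$ so that it meets the fundamental chamber $c_0$ (chamber transitivity of $G$), and then, each time $l$ exits $\mathbb{A}$ across a wall, applies an element of the corresponding root group fixing $c_0$ to fold that branch of $l$ back into $\mathbb{A}$. This produces an infinite product of root-group elements all lying in the stabilizer of $c_0$; since that stabilizer is compact, the partial products subconverge to some $g\in\overline{G}$, and this $g$ sends $l(\mathbb{R})$ into $|\mathbb{A}|$. So the passage from $G$ to the completion $\overline{G}$ is used in a concrete way: the desired $g$ is obtained as a limit in a compact chamber stabilizer.

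In your approach the same analytic content is hidden in stages~2--3, which you call ``routine''. Strong transitivity of $G$ coming from the twin root datum is strong transitivity with respect to the $G$-apartment system $\{g.\mathbb{A}:g\in G\}$, not with respect to the complete apartment system; and the apartment $\mathbb{A}'$ you extract in stage~1 by a compactness/diagonal argument lies a priori only in the complete system. Density alone does not upgrade this: what one actually needs is that the compact open stabilizer $\overline{G}_{c_0}$ acts transitively on \emph{all} apartments through $c_0$, and the standard proof of that is precisely the infinite folding argument in $\overline{G}_{c_0}$ that the paper performs directly on $l$. So your stages~2--3 are not a free consequence of the BN-pair for $G$; they require the same limiting step, just packaged differently.

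By contrast, your stage~1 (every geodesic line lies in some apartment of the complete system) is, as you sketch, a genuine but standard consequence of convexity of apartments together with local finiteness, and is the lesser of the two issues. What your decomposition buys is a cleaner conceptual statement (line $\subset$ apartment, then apartment $\mapsto$ standard apartment); what the paper's approach buys is that it isolates exactly where the completion $\overline{G}$ enters, without needing the intermediate existence of $\mathbb{A}'$ at all.
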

\begin{proof}
Since
the group $G$
acts transitively
on chambers,
there is
an element $g'$ in $G$
such that
$g'.l(\mathbb{R})$ intersects
the fundamental chamber $c_0\in \mathbb{A}$.
We therefore
may, and will,
assume that
$l(\mathbb{R})$ intersects $c_0$
from the outset.

Whenever $l$ leaves $\mathbb{A}$,
necessarily at a wall,
use elements of the corresponding root group $U_\alpha$
which fixes $c_0$
to `fold $l$ into $\mathbb{A}$' .     
This needs to be done
at increasing distance from $c_0$
along $l$
`on both sides',
leading to
an infinite product
of elements from root groups.
The sequence
consisting of
the partial products
of that infinite product
is contained in
the stabilizer of $c_0$,
which is a compact set.
Hence
that sequence
has a convergent subsequence,
which implies that
the infinite product
defined above
is convergent,
with limit
$g$ say.
By construction,
$g$
attains
the purpose
of the element
of the same name
in the statement
of the proposition
and
we are done.
\qed
\end{proof}

The second step
in the proof
of statment~2 of Theorem~\ref{thm:strongMainThm}
consists of
the following strengthening of
Theorem~14 in~\cite{simpl+superrig(twin-building-lattices)}.

\begin{theorem}[%existence of  a
a ``fundamental hyperbolic configuration'' exists w.r.t. any line]
\label{thm:fundamental-hyperbolic-configuration_line}
Let $\mathbb{A}$ be
a Coxeter complex,
whose type is
irreducible
and
neither spherical
nor affine.
Suppose that
$l\colon \mathbb{R}\to |\mathbb{A}|$ is
a geodesic line. % in $|\mathbb{A}|$.
Then
there is a triple of roots $(\alpha,\beta,\gamma)$
which are
pairwise disjoint
and
pairwise non-opposite
such that
for suitably chosen
real numbers $a$ and $b$
the rays $l(]-\infty,a])$ and $l([b,\infty[)$
are contained in
the interior of
the half-apartments
defined by $\alpha$ and $\beta$ respectively.           %changed
\end{theorem}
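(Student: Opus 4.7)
The plan is to strengthen Theorem~14 of~\cite{simpl+superrig(twin-building-lattices)}, which supplies fundamental hyperbolic configurations in $\mathbb{A}$ without reference to any line, into one aligned with the prescribed geodesic~$l$. Two key ingredients are the abundance of walls transverse to $l$ (available because $\mathbb{A}$ is non-spherical and hence unbounded with locally finite walls) and the richness of the wall structure under the irreducible, non-spherical, non-affine hypothesis, which is what drives the original theorem.

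First I would enumerate the walls of $\mathbb{A}$ crossed by $l$ as $(H_n)_{n\in\mathbb{Z}}$ with crossing parameters $(t_n)$ strictly increasing to $\pm\infty$, let $\alpha_n$ denote the root bounded by $H_n$ whose interior contains the negative ray $l(]-\infty,t_n[)$, and pick indices $n\ll m$ for which $H_n$ and $H_m$ are disjoint in $\mathbb{A}$. Setting $\alpha:=\alpha_n$ and $\beta:=-\alpha_m$, the required inclusions $l(]-\infty,a])\subset\mathrm{Int}(\alpha)$ and $l([b,\infty[)\subset\mathrm{Int}(\beta)$ hold for any $a<t_n$ and $b>t_m$. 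A direct convex-geometric argument in $\mathbb{A}$ shows that two disjoint half-apartments both containing $l(-\infty)$ on their closures at infinity must be nested, so $\alpha_n\subset\alpha_m$, giving $\beta=-\alpha_m\subset-\alpha_n=-\alpha$ and hence $\alpha\cap\beta=\varnothing$. Non-oppositeness $\alpha\ne-\beta$ is immediate from $H_n\ne H_m$. The region $-\alpha\cap-\beta$ is the resulting open ``strip'' bounded by $H_n$ and $H_m$ and containing the segment $l([t_n,t_m])$.

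Second I would produce $\gamma$ by finding a half-apartment contained in this strip; non-oppositeness of $\gamma$ to both $\alpha$ and $\beta$ is then automatic since $\partial\gamma$ is necessarily disjoint from $H_n$ and $H_m$. The existence of such a $\gamma$ will be reduced to Theorem~14 of~\cite{simpl+superrig(twin-building-lattices)}, whose proof under the irreducible non-spherical non-affine hypothesis supplies walls sitting inside strips of this sort with the correct half-apartment orientation. I expect this last step to be the main obstacle, and the exclusion of affine type is decisive here: in the affine case two disjoint walls are parallel and the strip between them has bounded perpendicular width, so no half-apartment can fit inside, whereas in the indefinite-type case disjoint walls generically diverge and leave a strip of ``hyperbolic'' width into which further walls and their associated half-apartments can be inscribed.
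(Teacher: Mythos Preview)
Your approach is essentially the same as the paper's: choose two disjoint walls crossed by $l$ to obtain the pair $(\alpha,\beta)$ of disjoint non-opposite roots containing the two ends of $l$, and then invoke Theorem~14 of~\cite{simpl+superrig(twin-building-lattices)} to furnish $\gamma$. The paper does exactly this in three lines, simply picking one wall $H$ cut by $l$, noting that non-sphericity guarantees a second wall $H'$ cut by $l$ but disjoint from $H$, and then citing Theorem~14.

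Two minor remarks. First, your careful nesting argument for $\alpha_n\subset\alpha_m$ is correct but unnecessary for the paper's purposes: once $H$ and $H'$ are disjoint and $l$ crosses both, the orientations forced by the ends of $l$ already make $\alpha$ and $\beta$ disjoint and non-opposite, which is all Theorem~14 requires as input. Second, your worry that producing $\gamma$ is ``the main obstacle'' is misplaced: Theorem~14 is precisely the statement that any pair of disjoint non-opposite roots in an irreducible non-spherical non-affine Coxeter complex can be completed to a triple of pairwise disjoint, pairwise non-opposite roots, so it is a direct citation rather than a reduction requiring further work. Your intuition about why affine type must be excluded is correct, but that analysis belongs to the proof of Theorem~14 itself, not to its application here.
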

\begin{proof}
The line $l(\mathbb{R})$
must cut
some wall
of $\mathbb{A}$,
$H$ say.
One of
the two roots
whose boundary is $H$
contains
the ray $l(]-\infty,a])$
for sufficiently small $a$;
we name
that root $\alpha$.
%UDO: changed back 
Since
the Coxeter complex
is not of spherical type,
there is                                    
another wall
$H'$
which cuts $l$,
but not $H$.
Call $\beta$
the root
whose boundary is $H'$
and which
contains
the ray $l([b,\infty[)$
for sufficiently large $b$.
The existence of a root $\gamma$
as in the statement
is then assured
by Theorem~14 in~\cite{simpl+superrig(twin-building-lattices)},
which completes
the proof.
\qed
\end{proof}

The third and final step
in the proof
of statment~2 of Theorem~\ref{thm:strongMainThm}
is an immediate consequence of
our assumption
that root groups
are contractive
and
the geometric criterion
for membership
in contraction groups.

\begin{lemma}[non-triviality of intersection of opposite contraction groups]
\label{lem:}
Let $\overline{G}$ be
a group
which contains
the root groups
of a group with twin root datum
all of whose root groups
are contractive.
Assume that
$h\in \overline{G}$ is not topologically periodic
and
let $l$ be an axis of $h$.
If
$\gamma$ is a root
whose position
relative to $l$
is as described in the previous lemma,
then
$U_{-\gamma}\subseteq U_h\cap U_{h^{-1}}$.
Hence,
$U_h$ is not closed.
\end{lemma}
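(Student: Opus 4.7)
The plan is to verify, for an arbitrary $u \in U_{-\gamma}$, the geometric criterion for contraction-group membership given by Lemma~\ref{lem:contractionGs_geo}, using only the contractive-root-group property applied to the root $-\gamma$ together with the position of the configuration $(\alpha,\beta,\gamma)$ relative to the axis $l$. First I would parametrize $l$ so that $-\xi := l(\infty)$ is the repelling fixed point of $h$ at infinity, and correspondingly $\xi := l(-\infty)$ is the repelling fixed point of $h^{-1}$. The previous theorem supplies $a < b$ such that $l(]-\infty,a])$ lies in the interior of $\alpha$ and $l([b,\infty[)$ lies in the interior of $\beta$. Since $\gamma$ is disjoint from both $\alpha$ and $\beta$, each of those half-apartments is contained in $-\gamma$, so both rays lie in the open half-apartment~$-\gamma$.

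The core step combines the contractive hypothesis with the geometry of the configuration. Writing $R(x)$ for the radius of the largest ball around $x \in -\gamma$ that is fixed pointwise by $U_{-\gamma}$, the contractive-root-group property gives $R(x) \to \infty$ as $d(x,\partial\gamma) \to \infty$. The pairwise-disjoint, pairwise-non-opposite triple $(\alpha,\beta,\gamma)$ in the non-spherical, non-affine, irreducible Coxeter complex also forces $d(l(t),\partial\gamma) \to \infty$ as $|t| \to \infty$, since neither end of $l$ is asymptotic to $\partial\gamma$. Given $r > 0$, I would choose $p = p(u,r)$ large enough that $R(l(t)) \ge r$ for all $t \ge p$; then every point within distance $r$ of the ray $l([p,\infty[)$ is fixed by $U_{-\gamma}$, and in particular by $u$. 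Lemma~\ref{lem:contractionGs_geo} then yields $u \in U_h$. Running the symmetric argument in the opposite direction, with $\alpha$ in place of $\beta$ and the tail $l(]-\infty,-p])$ in place of $l([p,\infty[)$, gives $u \in U_{h^{-1}}$. Hence $U_{-\gamma} \subseteq U_h \cap U_{h^{-1}}$.

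Since root groups in a twin root datum are non-trivial, $U_{-\gamma}$ is a non-trivial subgroup of $\overline{U}_h \cap \overline{U}_{h^{-1}}$, so the equivalence of conditions~(1) and~(4) of Theorem~3.32 in~\cite{contrG+scales(AUT(tdlcG))}, already recalled in Subsection~\ref{subsec:criterion-non-closed-contrGs}, yields that $U_h$ is not closed. The main obstacle is the geometric claim $d(l(t),\partial\gamma) \to \infty$: this is precisely what fails in the affine case (where a geodesic line can run parallel to a wall at bounded distance), and what makes the non-spherical, non-affine, irreducible hypothesis genuinely necessary; I would expect this divergence to be extractable directly from the choice of the triple $(\alpha,\beta,\gamma)$ in the proof of Theorem~\ref{thm:fundamental-hyperbolic-configuration_line}, which is precisely why the phrase ``fundamental hyperbolic configuration'' is appropriate.
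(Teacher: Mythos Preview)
Your argument is the same as the paper's: use contractivity of $U_{-\gamma}$ together with the geometric membership criterion of Lemma~\ref{lem:contractionGs_geo} to place $U_{-\gamma}$ inside both $U_h$ and $U_{h^{-1}}$, then invoke non-triviality of root groups and Theorem~3.32 of~\cite{contrG+scales(AUT(tdlcG))}. You are in fact more explicit than the paper at one point: the paper's proof (and the Remark preceding it) simply asserts that a ray lying in the interior of the half-apartment $-\gamma$ is enough, whereas you correctly isolate the intermediate claim $d(l(t),\partial\gamma)\to\infty$ that is needed to pass from contractivity to the criterion of Lemma~\ref{lem:contractionGs_geo}, and you rightly identify this as the spot where the non-affine hypothesis is genuinely used.
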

\begin{proof}
Since
the root group $U_{-\gamma}$
is contractive,
Lemma~\ref{lem:contractionGs_geo} ensures that             %changed
it is contained in
any contraction group $U_k$
with the property that
the repelling fixed point
of $k$
at infinity
is defined by
a ray
that is
contained in
the interior of
the half-apartment
defined by $-\gamma$.                                    %changed
%(by the geometric characterization
%for membership
%in contraction groups).
%
Both $h$ and $h^{-1}$
satisfy
this condition on $k$,
hence $U_{-\gamma}\subseteq U_h\cap U_{h^{-1}}$
as claimed.
Since
$U_{-\gamma}$ is not trivial,
we infer
from Theorem~3.32 in~\cite{contrG+scales(AUT(tdlcG))}
that $U_h$ is not closed.
%showing
%the second claim.                           %changed
\qed
\end{proof}

The following lemma
provides
the final statement
of %the strong version of the Main Theorem,
Theorem~\ref{thm:strongMainThm}
and thereby
concludes the proof of that theorem.

\begin{lemma}[existence of non-topologically periodic elements]
\label{lem:ex(non-top-periodic)}
Let $G$ be
a group
with a locally finite twin root datum
and
$\overline{G}$ the closure of
the image of $G$
in the automorphism group
of its positive building.
Then
every element
of infinite order
in the Weyl group
of $G$
lifts to
a non-topologically periodic element
of $\overline{G}$;
in particular,
if the Weyl group
of $G$
is not of spherical type,
then
the group $\overline{G}$
contains
non-topologically periodic elements.
\end{lemma}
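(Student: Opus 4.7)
The plan is to lift the Weyl element directly and exploit the action on the standard apartment. Given an element $w$ of infinite order in the Weyl group $W$, I would choose any preimage $n\in N$ under the surjection $N\twoheadrightarrow W$ supplied by the twin root datum axioms. The subgroup $N$ sits inside $G\subseteq\overline{G}$, stabilises the standard apartment $\mathbb{A}$, and acts on the Davis realisation $|\mathbb{A}|$ through its quotient $W$; in particular $n$ acts on $|\mathbb{A}|$ exactly as $w$ does.

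The first step is to verify that $w$ acts hyperbolically on $|\mathbb{A}|$. Cell stabilisers of the $W$-action on $|\mathbb{A}|$ are conjugates of finite special subgroups of $W$, so the infinite-order element $w$ fixes no cell, hence no point, of $|\mathbb{A}|$. Because $|\mathbb{A}|$ has only finitely many isometry classes of cells, the Bridson theorem already invoked in the proof of Lemma~\ref{lem:elements_geo+} rules out parabolic isometries, so $w$ is hyperbolic with some axis $\ell\subseteq|\mathbb{A}|$ and translation length $\tau>0$.

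The second step transfers this to the whole building $X$. Since $n$ preserves the convex subcomplex $|\mathbb{A}|$ and acts there as $w$, it translates $\ell$ by $\tau$. If $n$ fixed a point $x\in X$, then the (unique) closest-point projection of $x$ onto the convex geodesic $\ell$ would be fixed by $n$, hence by $w$; but no point of $\ell$ is fixed by $w$. Thus $n$ is not elliptic, and because $X$ likewise has finitely many isometry classes of cells, Lemma~\ref{lem:elements_geo+} forces $n$ to be hyperbolic and therefore non-topologically periodic in $\overline{G}$.

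For the ``in particular'' clause, I would recall the classical fact that finite Coxeter groups coincide with those of spherical type: if $W$ is not of spherical type, then some irreducible standard parabolic subgroup of $W$ is infinite and non-spherical, and a Coxeter element of that parabolic subgroup has infinite order, supplying the required element. The main, and essentially only, technical point in the whole argument is confirming that the hyperbolic behaviour of $n$ on $\mathbb{A}$ really survives passage to the larger $\CAT0$-complex $X$; the closest-point projection argument above is the cleanest way to close that gap.
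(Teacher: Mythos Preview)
Your proof is correct and follows essentially the same approach as the paper: lift $w$ to $n\in N$, observe that $n$ stabilises a complete convex subspace on which it acts without fixed point, and use the nearest-point projection onto that subspace to conclude that $n$ has no fixed point in $X$, hence is non-topologically periodic by Lemma~\ref{lem:elements_geo+}. The only cosmetic difference is that the paper projects directly onto the whole apartment $|\mathbb{A}|$ rather than first extracting an axis $\ell\subseteq|\mathbb{A}|$ as you do; both choices of $n$-invariant convex subspace work equally well.
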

\begin{proof}
Since
a Coxeter group
is torsion
if and only if
it is of spherical type,
the second claim
follows from the first.
In what follows,
we will show
that the lift
of an element
$w$ in
the Weyl group
is topologically periodic
if and only if
$w$ has finite order. %UDO: changed `had' to `has'

By Lemma~\ref{lem:elements_geo+},
an element,
$n$ say,
of $\overline{G}$
is topologically periodic
if and only if
its action
on the building,
$X$,
has a fixed point.
If
that element $n$
is obtained
as an inverse image
of an element,
$w$ say,
of the Weyl group,
it belongs to
the stabilizer of
the standard apartment $\mathbb{A}$.
Since
the Davis-realization $|\mathbb{A}|$
of the standard apartment
is
a complete, convex
subspace of the
complete $\CAT0$-space $X$,
using
the nearest-point projection
from $X$
onto $|\mathbb{A}|$,
we see that
the action of $n$
on $X$
has a fixed point
if and only if
its restricted action
on $|\mathbb{A}|$
has a fixed point.
The latter condition
is equivalent to
the condition that
the natural action
of $w$ on $|\mathbb{A}|$
has a fixed point.
Since this happens
if and only if
$w$ has finite order,
our claim is proved.
\qed
\end{proof}

%   The geometric characterization
%   of membership
%   in parabolic groups,
%   Lemma~\ref{lem:parabolics_geo},
%   in contraction groups,
%   Lemma~\ref{lem:contractionGs_geo},
%   together with
%   Theorem~3.32 in~\cite{contrG+scales(AUT(tdlcG))}
%   imply the following corollary.

%
%   \begin{corollary}
%   Let $G$ be
%   a group
%   with a locally finite twin root datum,
%   whose root groups
%   are contractive
%   and
%   $\overline{G}$ the closure of
%   the image of $G$
%   in the automorphism group
%   of its positive building.
%   %
%   Assume that
%   $h$ is
%   a non-topologically periodic element
%   in $\overline{G}$.
%   %
%   Then
%   there is
%   a non-trivial element
%   in $\overline{G}$
%   that fixes
%   the attracting
%   and
%   the repelling
%   fixed point
%   of $h$ at infinity
%   and
%   whose fixed point set
%   grows without bounds
%   when approaching
%   the repelling fixed point.
%   \qed
%   \end{corollary}

\section{The case of a disconnected Dynkin diagram}

The following two results                        %changed
may be used
to reduce the determination
of contraction groups
for elements
in arbitrary complete Kac-Moody groups
to the determination
of the contraction groups
in the factors
defined by
the irreducible components.
Their proofs
are left
to the reader.

\begin{lemma}[product decomposition for root data with disconnected diagram]
\label{lem:disconnectedtype}
Let $G$ be
a group
with a locally finite twin root datum
such that
the type of $G$
is the product
of irreducible factors
whose restricted root data
define groups $G_1$, \ldots $G_n$.
Denote
by $\underline{H}$
the quotient
of a group $H$
by its center.
Then
\[
\underline{G}\cong \underline{G}_1\times \cdots\times \underline{G}_n
\qquad\text{and}\qquad
\overline{G}\cong \overline{G}_{1}\times\cdots\times \overline{G}_{n}\,.
\]
as abstract, respectively topological, groups.
\qed
\end{lemma}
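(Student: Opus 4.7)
The plan is to first record the decomposition at the level of the combinatorial data, then transfer it to the group $G$ modulo its center, and finally push the decomposition through to the completion $\overline{G}$ by using the product structure of the building.

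Write $I=I_1\sqcup\cdots\sqcup I_n$ for the partition of the index set of simple reflections into the connected components of the Coxeter diagram. Standard Coxeter theory then yields $W\cong W_1\times\cdots\times W_n$, a partition $\Phi=\Phi_1\sqcup\cdots\sqcup\Phi_n$ of the set of roots, and a product decomposition $|\mathbb{A}|=|\mathbb{A}_1|\times\cdots\times|\mathbb{A}_n|$ of the Davis realization of the standard apartment, with walls from distinct components meeting orthogonally; the positive building inherits a corresponding product decomposition $X=X_1\times\cdots\times X_n$.

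For the algebraic claim I would first show that root groups indexed by different components centralize each other. If $\alpha\in\Phi_i$ and $\beta\in\Phi_j$ with $i\neq j$, then the pair $\{\alpha,\beta\}$ is prenilpotent (the four ``quadrants'' cut out by the two orthogonal walls are nonempty) and has empty open interval, since no root lies strictly between two walls from distinct orthogonal components. The commutation relation in the twin root datum axioms therefore forces $[U_\alpha,U_\beta]=\{e\}$. Letting $G_i$ denote the subgroup of $G$ generated by the $U_\alpha$ with $\alpha\in\Phi_i$ together with the image in $G$ of the corresponding restricted torus, the subgroups $G_1,\dots,G_n$ pairwise commute and jointly generate $G$. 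The multiplication map $G_1\times\cdots\times G_n\to G$ is thus a surjective homomorphism, and its kernel is central in the product, since an element of $G_i$ lying in the kernel centralizes every $U_\alpha$ with $\alpha\in\Phi_i$ and hence lies in the center of $G_i$. Passing to the quotient by centers yields $\underline{G}\cong\underline{G}_1\times\cdots\times\underline{G}_n$.

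The topological statement is then forced by the decomposition $X=X_1\times\cdots\times X_n$: the type-preserving part of $\mathsf{Aut}(X)$ embeds as a closed subgroup of $\prod_i\mathsf{Aut}(X_i)$, and the image of $G$ in $\mathsf{Aut}(X)$ lies in this product subgroup by the algebraic decomposition just established. Since taking closures is compatible with finite direct products of second-countable totally disconnected locally compact groups, the closure $\overline{G}$ maps isomorphically as a topological group onto $\overline{G}_1\times\cdots\times\overline{G}_n$.

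The main obstacle will be the bookkeeping around the torus: the torus $H$ of the full twin root datum is in general only an extension of $H_1\times\cdots\times H_n$ by a finite central subgroup, which is precisely why the statement must be formulated after quotienting by the center. A secondary subtlety is that one must verify no automorphism in $\overline{G}$ permutes the factors $X_i$; this is true because the image of $G$ is type-preserving and the product decomposition of $X$ is canonical, being recoverable from the parallel classes of walls, so all automorphisms in the closure respect the factorization.
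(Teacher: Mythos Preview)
The paper does not actually prove this lemma: immediately before the statement it says ``Their proofs are left to the reader,'' and the \qed\ appears at the end of the statement block with no argument given. Your outline is a reasonable and essentially correct way to supply what the paper omits --- decompose the Coxeter data, use the commutation relations of the twin root datum axioms to see that root groups from distinct components centralize one another, and then exploit the product decomposition of the Davis realization to handle the completion.

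One small point worth tightening: for the topological claim you pass from the algebraic decomposition of $\underline{G}$ to the action on $X$, but you should make explicit that each $G_i$ acts trivially on the factors $X_j$ for $j\neq i$ (equivalently, that the image of $G_i$ in $\mathsf{Aut}(X)$ lands in the $i$th factor of $\prod_k\mathsf{Aut}(X_k)$). This is what actually lets you identify the closure of the image of $G$ with the product of the $\overline{G}_i$; your remark about type-preservation addresses only the weaker issue of not permuting factors.
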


\begin{lemma}[contraction groups of elements in products]
\label{lem:contrGs(elements<prod)}
Let $\overline{G}_1,\ldots,\overline{G}_n$
be locally compact groups
and
$(g_1,\ldots,g_1)\in \overline{G}_1\times\ldots\times\overline{G}_n$.
Then
\[
U_{(g_1,\ldots,g_n)}=U_{g_1}\times\cdots\times U_{g_n}\,.
\]
\qed
\end{lemma}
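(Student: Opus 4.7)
The statement amounts to unraveling the definition of contraction group in a product; the proof is a routine combination of two observations, one algebraic and one topological. The plan is the following.

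First I would note that the inner automorphism associated to $g=(g_1,\ldots,g_n)$ in $\overline{G}_1\times\cdots\times\overline{G}_n$ acts coordinatewise: for any $x=(x_1,\ldots,x_n)$ and any $k\in\mathbb{N}$,
\[
g^{k}xg^{-k}=\bigl(g_1^{k}x_1 g_1^{-k},\,\ldots,\,g_n^{k}x_n g_n^{-k}\bigr)\,.
\]
This is immediate from the definition of the product group operation.

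Next I would invoke the fact that the (topological group) product carries the product topology and that, in a product of topological spaces, a sequence converges to a limit if and only if each coordinate sequence converges to the corresponding coordinate of the limit. Applied to the sequence $\bigl(g^{k}xg^{-k}\bigr)_{k\in\mathbb{N}}$ with limit candidate $e=(e_1,\ldots,e_n)$, this yields
\[
g^{k}xg^{-k}\xrightarrow[k\to\infty]{}e
\quad\Longleftrightarrow\quad
g_i^{k}x_i g_i^{-k}\xrightarrow[k\to\infty]{}e_i\ \text{for every}\ i\in\{1,\ldots,n\}\,.
\]

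Finally I would translate this equivalence into a set equality by applying the definition of contraction group on both sides: the left-hand condition says $x\in U_{(g_1,\ldots,g_n)}$, while the right-hand condition says $x_i\in U_{g_i}$ for every $i$, i.e., $x\in U_{g_1}\times\cdots\times U_{g_n}$. Hence the two subgroups coincide, proving the lemma. There is no substantive obstacle here; the only care needed is to make explicit that the topology on the direct product is the product topology, so that coordinatewise convergence is available, and that the identity of the product group is the tuple of identities, so that the two occurrences of $e$ above match up correctly.
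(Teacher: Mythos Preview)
Your argument is correct and is exactly the routine verification the authors have in mind: the paper explicitly leaves the proof of this lemma to the reader, so there is no proof in the text to compare against. The coordinatewise description of conjugation together with the characterization of convergence in the product topology is the natural (and essentially the only) way to establish the equality.
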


We conjecture that
the contraction groups
for elements
in a complete Kac-Moody group
of spherical or affine type
are always closed.
Supporting evidence
for that conjecture
is provided by
the following proposition.

\begin{proposition}[contraction groups for spherical and known affine types]
\label{prop:contrGs(spherical,known-affine)}
Let $\overline{G}$ be
a totally disconnected,
locally compact group.
If
\begin{enumerate}
\item
\label{contrGs(spherical-type)}
either
$\overline{G}$ is
the geometric completion of
an abstract Kac-Moody group
of spherical type
over a finite field,
\item
\label{contrGs(affine-type)}
or
$\overline{G}$
is a topological subgroup
of the general linear group
over a local field,
\end{enumerate}
then
all contraction groups
of elements
in $\overline{G}$
are closed.
\end{proposition}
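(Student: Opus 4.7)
The plan is to treat the two cases of the proposition separately, reducing each to results established earlier in the excerpt.

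For part~(\ref{contrGs(spherical-type)}) the strategy is to show that $\overline{G}$ is finite and then apply Lemma~\ref{lem:contrGs(top-periodic-elements)}. Because $G$ is of spherical type over a finite field, the Weyl group is finite and every root group of the natural root datum is finite; hence the positive building has only finitely many chambers, its Davis-realization is a finite cell complex, and the closed subgroup $\overline{G}$ of the (finite) automorphism group of this complex is itself finite. Every element of a finite, and so compact, group is topologically periodic, so Lemma~\ref{lem:contrGs(top-periodic-elements)} forces every contraction group in $\overline{G}$ to be trivial, hence closed.

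For part~(\ref{contrGs(affine-type)}) the first step is to reduce to the ambient group $\mathrm{GL}_n(K)$, where $K$ denotes the local field, via Remark~\ref{rem:subgroup-contractionGs+parabolics_geo}: for every $g\in\overline{G}$ one has $U_g^{\overline{G}}=\overline{G}\cap U_g^{\mathrm{GL}_n(K)}$, and since $\overline{G}$ carries the subspace topology, closedness of $U_g^{\overline{G}}$ in $\overline{G}$ will follow as soon as $U_g^{\mathrm{GL}_n(K)}$ is closed in $\mathrm{GL}_n(K)$. The main task is therefore to identify $U_g^{\mathrm{GL}_n(K)}$ with the group of $K$-rational points of a unipotent algebraic subgroup of $\mathrm{GL}_n$, which is automatically Zariski closed and hence closed in the analytic topology. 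My plan for this is to use the multiplicative Jordan decomposition $g=g_sg_u$ and to decompose $K^n$ as $V_<\oplus V_=\oplus V_>$, the $K$-rational direct sum of those generalized eigenspaces of $g_s$ whose eigenvalues have absolute value $<1$, $=1$, $>1$ respectively (the partition by absolute value being $\mathrm{Gal}(\overline{K}/K)$-invariant, so the summands are defined over $K$). Expanding $g^nxg^{-n}$ in block form with respect to this decomposition and using that $g_u$ contributes only polynomial growth while $g_s$ yields geometric decay or blow-up on the non-equatorial blocks, the condition $g^nxg^{-n}\to I$ translates into the statement that $x$ lies in the $K$-points of the unipotent radical of the parabolic subgroup of $\mathrm{GL}_n$ stabilising the flag $V_>\subset V_>\oplus V_=$.

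The hardest point will be the block-matrix bookkeeping needed to verify, firstly, that the polynomial growth of $g_u$ produces no extra contracting directions on the equatorial block $V_=$, and secondly, that every element of the described unipotent radical is genuinely contracted. Both issues reduce to routine estimates comparing $\|g_s^n v\|$ with $\|g_u^n\|$, and may be bypassed altogether by appealing directly to the classical description — already cited in the paper's introduction — of contraction groups of inner automorphisms in reductive algebraic groups over local fields as $K$-points of unipotent radicals of parabolic subgroups.
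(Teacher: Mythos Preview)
Your proposal is correct and follows essentially the same strategy as the paper. For part~(\ref{contrGs(spherical-type)}) both arguments observe that $\overline{G}$ is finite; the paper then notes that a finite Hausdorff group is discrete so contraction groups are trivial, while you phrase the same conclusion via Lemma~\ref{lem:contrGs(top-periodic-elements)}. For part~(\ref{contrGs(affine-type)}) both proofs use Remark~\ref{rem:subgroup-contractionGs+parabolics_geo} to reduce to the ambient linear group and then identify the contraction group with the $K$-points of the unipotent radical of a parabolic; the only difference is that the paper makes one further reduction, embedding $\mathrm{GL}_n(k)$ into $\mathrm{SL}_{n+1}(k)$ via $g\mapsto\diag(g,\det(g)^{-1})$ and then citing Prasad's Lemma~2 in~\cite{elem:BTR+T} (as recorded in Example~3.13(1) of~\cite{contrG+scales(AUT(tdlcG))}), whereas you sketch the direct eigenspace computation in $\mathrm{GL}_n(K)$ itself. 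Your direct route is more self-contained but requires the block-matrix estimates you flag; the paper's route avoids all computation by leveraging the existing literature. One small caution on your direct computation: over a local field of positive characteristic the multiplicative Jordan decomposition $g=g_sg_u$ need not be defined over $K$, so it is cleaner to define $V_<$, $V_=$, $V_>$ as sums of generalized eigenspaces of $g$ itself grouped by absolute value, which is Galois-stable and yields the same growth estimates.
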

\begin{proof}
To show statement~\ref{contrGs(spherical-type)},
observe that
an abstract Kac-Moody group
of spherical type
over a finite field
is a finite group.
The associated
complete group,
$\overline{G}$,
is then finite too
and hence  is a discrete group,            
because its topology is Hausdorff.
Contraction groups
in a discrete group
are trivial,
and it follows
that
all contraction groups
of all elements in $\overline{G}$
are closed
if $G$
is of spherical type.

As noted in Remark~\ref{rem:subgroup-contractionGs+parabolics_geo},
we obtain
the contraction group
of an element
$h$
with respect to
a (topological) subgroup,
$H$
by intersecting
the contraction group
relative to the ambient group                        
with $H$.

Thus 
to establish statement~\ref{contrGs(affine-type)}
it is enough
to treat
the special case
of the general linear group
over a local field,
$k$ say.
Using
the same observation again
and noting that
$\mathrm{GL}_n(k)$
can be realized as
a closed subgroup of $\mathrm{SL}_{n+1}(k)$
via $g\mapsto \diag(g,\det(g)^{-1})$,
it suffices
to prove
statement~\ref{contrGs(affine-type)}
in the special case
of the group $\mathrm{SL}_n(k)$,
where $k$ is
a local field.
But contraction groups
of elements
in $\mathrm{SL}_n(k)$
have been shown
to be $k$-rational points
of unipotent radicals
of $k$-parabolic subgroups
in~\cite[Lemma~2]{elem:BTR+T}
as explained in
Example~3.13(1) %UDO: changed `Exercise' to `Example' 
in~\cite{contrG+scales(AUT(tdlcG))}; %UDO: changed `of' to `in' 
as such
they are Zariski-closed
and hence
closed in
the Hausdorff-topology
induced by
the field $k$.
This proves
statement~\ref{contrGs(affine-type)}
for the group $\mathrm{SL}_n(k)$,
and,
by the previous reductions,
in all cases.
\qed
\end{proof}

There are
complete Kac-Moody groups
of affine type
for which it is unknown
whether
the criterion
listed under item~\ref{contrGs(affine-type)}
of Proposition~\ref{prop:contrGs(spherical,known-affine)}
can be applied.
For example,
the complete Kac-Moody groups
defined by the generalized Cartan-matrices
$\left(\begin{array}{cc}2 & m \\ -1 & 2\end{array}\right)$
with integral $m<-4$
are of that kind.

%\begin{acknowledgements}
%If you'd like to thank anyone, place your comments here
%and remove the percent signs.
%\end{acknowledgements}

%% BibTeX users please use
%\bibliographystyle{spmpsci}
%%\bibliographystyle{alpha}
%   \bibliography{% name your BibTeX data base
%   contrGs<topKacMoodyGs,%
%   AAGruppen,%
%   short-Abk,%
%   Baeume,%
%   %GGtheory,%
%   %Gitter,%
%   NCRaeume,%
%   %arithmetic,%
%   buildings,%
%   %diverses,%
%   %growth,%
%   %measures,%
%   %qi,%
%   %quartett,%
%   %short-Abk,%
%   %tilings,%
%   topG%,
%   %types,%
%   %unclassified%
%   }
%   \end{document}

\end{document}